\newcounter{rmnum}
\newenvironment{romannum}{\begin{list}{{\upshape (\roman{rmnum})}}{\usecounter{rmnum}
\setlength{\leftmargin}{14pt}
\setlength{\rightmargin}{8pt}
\setlength{\itemsep}{2pt}
\setlength{\itemindent}{-1pt}
}}{\end{list}}
\newcounter{anum}
\newlength{\noteWidth}
\long\def\notes#1{\ifinner
             {\tiny #1}
             \else
              \marginpar{\parbox[t]{\noteWidth}{\raggedright\tiny #1}}
               \fi}
\def\IEEEQEDclosed{\mbox{\rule[0pt]{1.3ex}{1.3ex}}}
\def\qed{\ifmmode\IEEEQEDclosed\else{\unskip\nobreak\hfil
\penalty50\hskip1em\null\nobreak\hfil\IEEEQEDclosed
\parfillskip=0pt\finalhyphendemerits=0\endgraf}\fi}
\def\qed{\hspace*{\fill}~\IEEEQED\par\endtrivlist\unskip}
\def\Re{\mathbb{R}}
\def\Sec#1{Sec.~\ref{#1}}
\def\notes#1{\marginpar{\tiny #1}\typeout{Notes!
Notes!
Notes!
}}
\renewcommand{\notes}[1]{\typeout{notes!}}
\def\FRAC#1#2#3{\genfrac{}{}{}{#1}{#2}{#3}}
\def\half{{\mathchoice{\FRAC{1}{1}{2}}%
{\FRAC{2}{1}{2}}%
{\FRAC{3}{1}{2}}%
{\FRAC{4}{1}{2}}}}
\def\Re{\field{R}}
\def\k{{\sf K}}
\def\Sec#1{Sec.~\ref{#1}}
\def\clZ{{\cal Z}}
\def\Sec#1{Sec~\ref{#1}}
\def\Expect{{\sf E}}
\def\Expect{{\sf E}}
\def\Sec#1{Sec.~\ref{#1}}
\def\P{{\sf P}}
\def\IEEEQEDclosed{\mbox{\rule[0pt]{1.3ex}{1.3ex}}}
\def\qed{\nobreak\hfill\IEEEQEDclosed}
\def\clZ{{\cal Z}}
\newtheorem{theorem}{Theorem}
\newtheorem{remark}{Remark}
\newtheorem{proposition}{Proposition}
\newtheorem{corollary}{Corollary}
\def\beq{\begin{eqnarray}} 
\def\bc{\begin{center}} 
\def\be{\begin{enumerate}}
\def\bi{\begin{itemize}} 
\def\bs{\begin{small}}
\def\bS{\begin{slide}}
\def\ec{\end{center}} 
\def\ee{\end{enumerate}}
\def\ei{\end{itemize}}
\def\es{\end{small}}
\def\eS{\end{slide}}
\def\eeq{\end{eqnarray}}
\newcommand{\newP}[1]{\noindent{\bf #1:}}
\newcommand{\trace}{\text{Tr}}
\newcommand{\PP}{{\sf P}}
\newcommand{\ud}{\,\mathrm{d}}
\def\Re{\mathbb{R}}
\def\Sec#1{Sec.~\ref{#1}}
\def\Expect{{\sf E}}
\def\clZ{{\cal Z}}
\renewcommand{\Re}{\mathbb{R}}
\def\FRAC#1#2#3{\genfrac{}{}{}{#1}{#2}{#3}}
\newcommand{\var}{\text{Var}}
\newcommand{\X}{X}
\newcommand{\Om}{\Omega}
\newcommand{\NN}{\mathcal{N}}
\newcommand{\mN}{m^{(N)}}
\newcommand{\SigN}{\Sigma^{(N)}}
\newcommand{\snorm}[1]{\|#1\|_s}
\newcommand{\Fnorm}[1]{\|#1\|_F}
\title{\LARGE \bf
Error Analysis for the Linear Feedback Particle Filter}
\author{Amirhossein Taghvaei and Prashant G. Mehta 
\thanks{Financial support from the NSF CMMI grant 1462773 is gratefully acknowledged. 
}
\thanks{A.~Taghvaei and P.~G.~Mehta are with the Coordinated
  Science Laboratory and the Department of Mechanical Science and
  Engineering at the University of Illinois at Urbana-Champaign (UIUC).}
}
\begin{document}

\maketitle
\thispagestyle{empty}
\pagestyle{empty}

\begin{abstract}
This paper is concerned with the convergence and the error analysis for the
feedback particle filter (FPF) algorithm.  The FPF is a controlled
interacting particle system where the control law is designed to solve
the nonlinear filtering problem.  For the linear Gaussian case,
certain simplifications arise whereby the linear FPF reduces to one
form of the ensemble Kalman filter.  For this and for the
more general nonlinear non-Gaussian case, it has been an open problem
to relate the convergence and error properties of the finite-$N$
algorithm to the mean-field limit (where the exactness results have
been obtained).  In this paper, the equations for empirical
mean and covariance are derived for the finite-$N$ linear FPF.
Remarkably, for a certain deterministic form of  FPF, the equations for
mean and variance are identical to the Kalman filter.  This allows
strong conclusions on convergence and error properties based on the classical filter stability
theory for the Kalman filter.  It is shown that the error converges to
zero {\em even} with finite number of particles. The paper also presents propagation of
chaos estimates for the finite-$N$ linear filter.  The error estimates
are illustrated with numerical experiments.  


\end{abstract}
\section{Introduction}

In recent years, there
has been a burgeoning interest in application of ideas and techniques from
statistical mechanics to control theory and signal processing.
Although some of these applications are classical (see e.g., Del
Moral~\cite{delmoralbook}), the recent impetus comes from explosive
interest in mean-field games, starting with two papers from 2007:
Lasry and Lions paper titled ``Mean-field games''~\cite{lasry07mean}
and a paper by Huang, Caines and Malham{\'e}~\cite{huacaimal07}.
These papers spurred interest in the analysis and synthesis of {\em
  controlled interacting particle systems}. 

Feedback particle filter (FPF) is an example of a controlled
interacting particle system to approximate the solution of the
continuous-time nonlinear filtering problem.  In FPF, the importance
sampling step of the conventional particle filter is
replaced with feedback control. Other steps
such as resampling, reproduction, death or birth of particles are
altogether avoided. 
 
The first interacting particle representation of the continuous-time filtering
problem appeared in the work of Crisan and Xiong~\cite{crisan10}.
Also in continuous-time settings, Reich and collaborators have derived
certain deterministic forms of the ensemble Kalman
filter~\cite{reich11,Reich-ensemble}.  These forms are identical to the linear FPF.  
An expository review of the continuous-time filters including the
progression from the Kalman filter (1960s) to the ensemble Kalman
filter (1990s) to the feedback particle filter (2010s) appears
in~\cite{TaghvaeiASME2017}.  In
discrete-time settings, Daum and collaborators have pioneered the
development of closely related particle flow
algorithms~\cite{DaumHuang08,daum2017generalized}.  

In numerical evaluations and comparisons, it is often found that
the control-based algorithms exhibit smaller simulation variance and
better scaling properties with the problem dimension (number of state
variables).  For example, several research groups have
reported favorable comparisons for the FPF algorithm as compared to
the traditional particle filter algorithms;
cf.,~\cite{stano2014,stano2013nonlinear,berntorp2015,surace_SIAM_Review,adamtilton_fusion13}.  
However, there is no theoretical justification/understanding of
this.  Much of the work for FPF and more broadly for the particle flow
algorithms and the mean-field game models has focused on the
properties of the mean-field limit (e.g., the exactness of the FPF has
been shown for the mean-field limit).

For the nonlinear FPF, the convergence analysis is difficult in part
because the gain function is implicitly defined as the solution
of a certain partial differential equation (pde) referred to as the
Poisson equation; cf.,~\cite{poisson15}.  For the linear Gaussian
case, the pde admits an explicit solution whereby the gain function is
the Kalman gain and the resulting linear FPF is an ensemble Kalman
filter~\cite{TaghvaeiASME2017}.  In this paper, two forms of linear
FPF are studied:   


\noindent
{\bf (A) Stochastic linear FPF:} The original formulation of the FPF
algorithm in the linear Gaussian setting~\cite[Eq.~(26)]{yang2016}.

\noindent
{\bf (B) Deterministic linear FPF} The optimal transport formulation
of the linear Gaussian FPF~\cite[Eq.~(15)]{AmirACC2016}. 

Both the formulations are exact in the following sense: In the
mean-field limit the distribution of the particles 
equals the posterior distribution of the filter.  The main difference
between the two formulations is that the process noise term in the
stochastic FPF is replaced with a deterministic term in the
deterministic FPF.  

The goal of this paper is to characterize the error properties of the
FPF in the limit when the number of particles $N$ is large but finite.  The
error metrics of interest include the mean-squared error between the
finite-$N$ estimates (empirical mean and the empirical covariance) and
their mean-field limits (conditional mean and covariance).
Additionally, it is of interest to investigate the convergence of
the empirical distribution of the interacting particle system to the
conditional distribution obtained in the mean-field limit.    

\newP{ Contributions of this paper}  
The evolution equations for the empirical mean and covariance are
derived for the two systems (A) and (B).  It is shown that these equations
for the deterministic FPF are identical to the Kalman filter.  The
evolution equations for the stochastic FPF include additional
stochastic terms due to process noise.  In the large $N$ limit, these
terms scale as $O(N^{-\frac{1}{2}})$.  For the deterministic FPF, the
following results are obtained in Prop.~\ref{prop:conv_error}: (i)
almost sure convergence as $t\rightarrow\infty$; (ii) mean-squared
convergence where the error is shown to convergence to zero as
$O(N^{-\frac{1}{2}})$.  Certain preliminary results on a propagation
of chaos analysis for the scalar problem appear in
Prop.~\ref{prop:prop-chaos}.



Closely related to this paper is the recent literature on stability
and convergence of the ensemble Kalman filter algorithms for both
linear and nonlinear problems.  Examples of the former
include~\cite{gland2009,mandel2015} in discrete-time setting
and~\cite{delmoral2016stability} in continuous-time setting.  Examples
of the latter
include~\cite{jana2016stability,delmoral2017stability,stuart2014stability}. 
More generally, the propagation
  of chaos analysis of interacting particle system
models has a rich history; cf.,~\cite{mckean66class,sznitman1991,rachev1998}.

\noindent{ \bf Notation}
$\NN(m,\Sigma)$ is a Gaussian probability distribution with mean
$m$ and covariance $\Sigma \succ 0$ ($\Sigma \succ 0$ means that the
matrix $\Sigma$ is positive definite).  For a vector $m$, $|m|$
denotes the Euclidean norm.  For a square matrix $\Sigma$,
$\Fnorm{\Sigma}$
denotes the Frobenius norm,
$\snorm{\Sigma}$ 
is the spectral norm,
$\Sigma^\top$ is the matrix-transpose, $\trace(\Sigma)$ is the
matrix-trace, and $\text{Ker}(\Sigma)$ denotes the null-space. 
There are three types of stochastic process
considered in this paper: (i) $X_t$ denotes the state of the (hidden)
signal at
time $t$; (ii) ${X}_t^i$ denotes the state of the
$i^{\text{th}}$ particle in a population of $N$ particles; and (iii)
$\bar{X}_t$ denotes the state of the McKean-Vlasov model obtained in
the mean-field limit ($N=\infty$).  The mean and the covariance for
these are denoted as follows: (i) ($m_t,\Sigma_t$) is the conditional mean and
the conditional covariance pair for $X_t$; (ii) ($\mN_t,\SigN_t$) is the
empirical mean and the empirical covariance for the ensemble
$\{X_t^i\}_{i=1}^N$; and (iii) ($\bar{m}_t,\bar{\Sigma}_t$) is the
conditional mean and the conditional covariance for $\bar{X}_t$.  
The
notation is tabulated in the accompanying
Table~\ref{tab:symbols-states}. 

 
 \begin{table}[h]
 \centering
 \begin{tabular}{|c|c|c|}
 \hline
 Variable & Notation & Equation \\ \hline\hline 
 State of the hidden process & $X_t$ & Eq.~\eqref{eq:dyn}\\
State of the $i^{\text{th}}$ particle in finite-$N$ sys.& $X_t^i$
&Eq.~\eqref{eq:Xit-s},~\eqref{eq:Xit-d} 
 \\
State of the McKean-Vlasov model  & $\bar{X}_t$ & Eq.~\eqref{eq:Xbart-stochastic},~\eqref{eq:Xbart} \\ 
\hline
  \vspace*{-0.1in}\\
  Kalman filter mean and covariance & ${m}_t,{\Sigma}_t$ & Eq.~\eqref{eq:KF-mean}-\eqref{eq:KF-variance} 
   \\
 Empirical mean and covariance & $\mN_t,\SigN_t$ & Eq.~\eqref{eq:empr_app_mean_var}
 \\
 Mean-field mean and covariance & $\bar{m}_t,\bar{\Sigma}_t$ & Eq.~\eqref{eq:Xbart-stochastic}-\eqref{eq:Xbart} 
 \\ \hline
 \end{tabular}
 \label{tab:symbols-states}
 \end{table}


The outline of the remainder of this paper is as follows:
Sec.~\ref{sec:prelim} introduces the two models of the FPF studied in
this paper.  Sec.~\ref{sec:mean_covariance} presents the results on
convergence and error estimates for the empirical mean and
covariance.  Sec.~\ref{sec:poa} presents the propagation of chaos
analysis.  All the proofs appear in the Appendix.      

\section{Problem formulation and background}
\label{sec:prelim}

Consider the linear Gaussian filtering problem:
\begin{subequations}
\begin{align}
\ud X_t &=  AX_t \ud t + \sigma_B\ud B_t\label{eq:dyn}\\
\ud Z_t &= CX_t\ud t + \ud W_t\label{eq:obs}
\end{align}
\end{subequations}
where $X_t \in \Re^d$ is the (hidden) state at time $t$, $Z_t \in \Re^m$ is the
observation; $A$, $C$, $\sigma_B$ are matrices of appropriate
dimension; and $\{B_t\}$, $\{W_t\}$ are mutually independent Wiener
processes taking values in $\Re^d$ and $\Re^m$, respectively. Without
loss of generality, the covariance matrices associated with $\{B_t\}$
and $\{W_t\}$ are identity matrices.
The initial condition $X_0$ is drawn from a Gaussian
distribution $\mathcal{N}(m_0,\Sigma_0)$, independent of $\{B_t\}$ and
$\{W_t\}$. The filtering problem is to compute the posterior
distribution $\P(X_t|\clZ_t)$ where $\clZ_t:=\sigma(Z_s;s\in[0,t])$
denotes the time-history of observations up to time $t$ (filtration).

For the linear Gaussian problem~\eqref{eq:dyn}-\eqref{eq:obs}, the posterior distribution
$\PP(\X_t|\clZ_t)$ is Gaussian $\mathcal{N}(m_t,\Sigma_t)$, whose mean
and covariance are given by the Kalman-Bucy filter~\cite{kalman-bucy}:
\begin{subequations}
\begin{align}
\ud m_t &= A m_t\ud t + \mathsf{K}_t(\ud Z_t - Cm_t\ud t)\label{eq:KF-mean}\\
\frac{\ud \Sigma_t}{\ud t} &= A \Sigma_t +  \Sigma_tA^\top + \sigma_B\sigma_B^{\top} - \Sigma_tC^\top C\Sigma_t\label{eq:KF-variance}
\end{align}
\end{subequations}
where $\k_t:=\Sigma_tC^\top$ is the Kalman gain and the filter is
initialized with the prior $(m_0,\Sigma_0)$. 


Feedback particle filter (FPF) is a controlled interacting
particle system to approximate the Kalman filter\footnote{Although the
considerations of this paper are limited to the linear Gaussian
problem~\eqref{eq:dyn}-\eqref{eq:obs}, the FPF algorithm is more
broadly applicable to nonlinear non-Gaussian filtering
problems~\cite{taoyang_TAC12,yang2016}.}.  In a numerical
implementation, the filter is simulated with $N$ interacting
particles where $N$ is typically large.  The analysis of the filter
is based on the so-called mean-field models which are obtained upon
replacing the interaction terms by their mean-field limits.  The
mean-field model is referred to as the McKean-Vlasov
model~\cite{mckean1966}.

We begin by presenting the
McKean-Vlasov stochastic differential equation (sde) for the linear FPF algorithm.  For these models,
the state at time $t$ is denoted as $\bar{X}_t$. Two types of FPF
algorithm are considered: (A) FPF using the constant gain
approximation of the gain function (Eq.~(26) in~\cite{yang2016}); and
(B) FPF obtained using optimal transportation (Eq.~(15) in~\cite{AmirACC2016}).
These algorithms are referred to as the stochastic linear FPF and the
deterministic linear FPF, respectively.

\newP{ (A) Stochastic linear FPF}  The state $\bar{X}_t$ evolves
according to the McKean-Vlasov sde:
\begin{align}
\ud \bar{X}_t &= A \bar{X}_t\ud t + \sigma_B \ud \bar{B}_t + \bar{\k}_t(\ud Z_t - \frac{C\bar{X}_t + C{\bar{m_t}}}{2}\ud t) \label{eq:Xbart-stochastic} 
\end{align}
where $\bar{\k}_t =\bar{\Sigma}_tC^\top$ is the Kalman gain; the
mean-field terms are the mean $\bar{m}_t=\Expect[\bar{X}_t|\clZ_t]$
and the covariance $\bar{\Sigma}_t
=\Expect[(\bar{X}_t-\bar{m}_t)(\bar{X}_t-\bar{m}_t)^\top|\clZ_t] $; 
$\{\bar{B}_t\}$ is an independent copy of the process noise $\{B_t\}$;
and the initial condition $\bar{X}_0 \sim \mathcal{N}(m_0,\Sigma_0)$.  

\newP{(B) Deterministic linear FPF} The McKean-Vlasov sde is:
\begin{align}
\ud \bar{X}_t &= A m_t\ud t + 
\bar{\k}_t(\ud Z_t - C{\bar{m_t}}\ud t) + \bar{G}_t(\bar{X}_t-\bar{m}_t)\ud t\label{eq:Xbart}
\end{align}
where (as before) $\bar{\k}_t =\bar{\Sigma}_tC^\top$ is the Kalman
gain; the mean $\bar{m}_t=\Expect[\bar{X}_t|\clZ_t]$
and the covariance $\bar{\Sigma}_t
=\Expect[(\bar{X}_t-\bar{m}_t)(\bar{X}_t-\bar{m}_t)^\top|\clZ_t]$; the initial condition $\bar{X}_0 \sim \mathcal{N}(m_0,\Sigma_0)$; and
\begin{equation*}
\bar{G}_t := A- \frac{1}{2}\bar{\k}_t C +
\frac{1}{2}\sigma_B\sigma_B^\top \bar{\Sigma}_t^{-1} + \Om_t\bar{\Sigma}_t^{-1} 
\label{eq:G-gen-N}
\end{equation*} 
where $\Om_t$ is {\em any} skew symmetric $d\times d$ matrix. 



The following Proposition, borrowed from~\cite{AmirACC2016}, shows
that both the filters are exact:


\begin{proposition}
\label{thm_lin} {\em (Theorem~1 in~\cite{AmirACC2016})} Consider the linear Gaussian filtering problem~\eqref{eq:dyn}-\eqref{eq:obs}, and the linear
FPF (Eq.~\eqref{eq:Xbart-stochastic} or Eq.~\eqref{eq:Xbart}).  If $\PP(X_0)=\PP(\bar{X}_0)$ then
\begin{equation*}
\PP(\bar{X}_t|\clZ_t)=\PP(\X_t | \clZ_t),\quad \forall t>0
\end{equation*}
Therefore, $m_t = \bar{m}_t$ and $\Sigma_t = \bar{\Sigma}_t$. 
\label{thm:consistency-FPF-lin}
\end{proposition}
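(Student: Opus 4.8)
The plan is to use that both McKean--Vlasov equations \eqref{eq:Xbart-stochastic} and \eqref{eq:Xbart} are \emph{conditionally linear-Gaussian}: given the observation filtration $\clZ_t$, the law of $\bar X_t$ is Gaussian, hence fully described by the pair $(\bar m_t,\bar\Sigma_t)$. It therefore suffices to show that this pair solves the Kalman--Bucy equations \eqref{eq:KF-mean}--\eqref{eq:KF-variance} with initial data $(m_0,\Sigma_0)$; since those equations have a unique solution, $(\bar m_t,\bar\Sigma_t)=(m_t,\Sigma_t)$ and then $\PP(\bar X_t\mid\clZ_t)=\NN(m_t,\Sigma_t)=\PP(X_t\mid\clZ_t)$, which is the claim.

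The first --- and essentially only delicate --- step is to make conditional Gaussianity rigorous in the presence of the mean-field closure: the drifts of \eqref{eq:Xbart-stochastic}/\eqref{eq:Xbart} contain $\bar m_t,\bar\Sigma_t$, which are themselves defined through the conditional law of the unknown process. I would resolve this by an ansatz/consistency/uniqueness argument. The candidate covariance $\Sigma_t$ solves a deterministic Riccati ODE that does not reference the particle system at all; posit $\bar\Sigma_t:=\Sigma_t$ and $\bar m_t:=m_t$ and substitute into the McKean--Vlasov equation. The resulting equation is a genuine linear SDE for $\bar X_t$ whose coefficients are deterministic except for the $\clZ_t$-adapted term $m_t$; since $\bar X_0\sim\NN(m_0,\Sigma_0)$ and the noise $\{\bar B_s\}$ (in case (A)) are independent of $\clZ_t$, the solution $\bar X_t$ is a $\clZ_t$-adapted affine image of jointly Gaussian variables, hence conditionally Gaussian given $\clZ_t$. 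One then computes its conditional mean and covariance and checks they are precisely $m_t$ and $\Sigma_t$, so the posited law is self-consistent; a standard contraction/uniqueness argument for the McKean--Vlasov SDE on bounded time intervals identifies it as the unique solution.

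The closed equations then follow by elementary It\^{o} calculus. Taking $\Expect[\,\cdot\mid\clZ_t]$ in \eqref{eq:Xbart-stochastic} annihilates $\sigma_B\,\ud\bar B_t$ and the $(\bar X_t-\bar m_t)$-terms and collapses $\tfrac12(C\bar X_t+C\bar m_t)$ to $C\bar m_t$, giving $\ud\bar m_t=A\bar m_t\,\ud t+\bar\k_t(\ud Z_t-C\bar m_t\,\ud t)$, i.e.\ \eqref{eq:KF-mean}; the same applies verbatim to \eqref{eq:Xbart}. Subtracting the mean equation yields the error process $\tilde X_t:=\bar X_t-\bar m_t$: in case (A), $\ud\tilde X_t=(A-\tfrac12\bar\k_tC)\tilde X_t\,\ud t+\sigma_B\,\ud\bar B_t$; in case (B), $\ud\tilde X_t=\bar G_t\tilde X_t\,\ud t$. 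Applying It\^{o}'s rule to $\tilde X_t\tilde X_t^\top$ and taking $\Expect[\,\cdot\mid\clZ_t]$ (the martingale increments vanish because $\{\bar B_s:s\le t\}$ is independent of $\clZ_t$) gives, in case (A), $\tfrac{\ud}{\ud t}\bar\Sigma_t=(A-\tfrac12\bar\k_tC)\bar\Sigma_t+\bar\Sigma_t(A-\tfrac12\bar\k_tC)^\top+\sigma_B\sigma_B^\top$; using $\bar\k_t=\bar\Sigma_tC^\top$ and symmetry of $\bar\Sigma_t$, the term $\tfrac12(\bar\k_tC\bar\Sigma_t+\bar\Sigma_tC^\top\bar\k_t^\top)$ becomes $\bar\Sigma_tC^\top C\bar\Sigma_t$, which is \eqref{eq:KF-variance}. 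In case (B), $\tfrac{\ud}{\ud t}\bar\Sigma_t=\bar G_t\bar\Sigma_t+\bar\Sigma_t\bar G_t^\top$; substituting $\bar G_t=A-\tfrac12\bar\k_tC+\tfrac12\sigma_B\sigma_B^\top\bar\Sigma_t^{-1}+\Om_t\bar\Sigma_t^{-1}$, the $\Om_t\bar\Sigma_t^{-1}$ contributions cancel by skew-symmetry of $\Om_t$, the two $\tfrac12\sigma_B\sigma_B^\top\bar\Sigma_t^{-1}\bar\Sigma_t$ terms add to $\sigma_B\sigma_B^\top$, and the $-\tfrac12\bar\k_tC$ terms again assemble into $-\bar\Sigma_tC^\top C\bar\Sigma_t$, recovering \eqref{eq:KF-variance}.

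It remains to invoke uniqueness: \eqref{eq:KF-variance} has a unique symmetric positive-definite solution from $\Sigma_0$, and, given it, \eqref{eq:KF-mean} is a linear SDE with a unique strong solution from $m_0$; hence $(\bar m_t,\bar\Sigma_t)=(m_t,\Sigma_t)$ for all $t>0$, and the conditional laws coincide. (Alternatively, one could verify directly that the conditional density of $\bar X_t$ satisfies the Kushner--Stratonovich equation, but the mean/covariance route above is shorter for the linear-Gaussian case.) The main obstacle is the conditional-Gaussianity and well-posedness step of the second paragraph; the remaining It\^{o}--Riccati bookkeeping is routine.
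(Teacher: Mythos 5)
Your argument is correct, and it is essentially the standard proof: the paper itself does not prove this proposition but imports it from~\cite{AmirACC2016}, where the argument is precisely the one you outline (conditional Gaussianity of $\bar X_t$ given $\clZ_t$, verification that $(\bar m_t,\bar\Sigma_t)$ satisfy the Kalman--Bucy/Riccati equations, and uniqueness). Your It\^{o} bookkeeping for both cases checks out --- in particular the cancellation of the $\Om_t\bar\Sigma_t^{-1}$ terms by skew-symmetry and the assembly of $-\bar\Sigma_tC^\top C\bar\Sigma_t$ from the $-\tfrac12\bar\k_tC$ terms --- and your consistency/ansatz resolution of the mean-field closure is the right way to make the conditional-Gaussianity step rigorous.
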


\begin{remark}{({\em Comparison of the deterministic and the stochastic
      FPF})} In the deterministic FPF, there is no explicit Wiener
  process for the process noise. 
  For example, with the choice of $\Om_t=0$, the deterministic linear
  FPF~\eqref{eq:Xbart} has the same terms as the stochastic linear
  FPF~\eqref{eq:Xbart-stochastic}, except that the process noise term
  $\sigma_B \ud \bar{B}_t$ in~\eqref{eq:Xbart-stochastic} is replaced by
  $\frac{1}{2}\sigma_B\sigma_B^\top
  \bar{\Sigma}_t^{-1}(\bar{X}_t-\bar{m}_t)$ in~\eqref{eq:Xbart}.  With
  any Gaussian prior, the term serves to simulate the effect of the process
  noise.  
\end{remark}

\medskip

\begin{remark}[Non-uniqueness] For the vector case ($d>1$), there are
  infinitely many choices of exact control laws, 
  parametrized by the skew-symmetric matrix $\Omega_t$.  In our prior
  work~\cite{AmirACC2016}, the non-uniqueness issues is addressed by introducing
  an optimal transportation cost.  The optimal
  skew-symmetry is shown to be the unique solution of the following
  matrix equation (Proposition~3 in~\cite{AmirACC2016}):
\begin{align*}
\Om_t\bar{\Sigma}_t^{-1} + \bar{\Sigma}_t^{-1}\Om_t & =  (A^\top - A)  \\
& + 
\half(\bar{\Sigma}_tC^\top C - C^\top C\bar{\Sigma}_t)  +  \half (\sigma_B\sigma_B^\top \bar{\Sigma}_t -\bar{\Sigma}_t\sigma_B \sigma_B^\top)
\end{align*}
The choice of $\Omega_t$ does not affect the distribution.  The
optimal skew-symmetry is a correction term that serves
to cancel the skew-symmetry in the dynamics (see Remark~5
in~\cite{AmirACC2016}).  
\end{remark}

\newP{Finite-$N$ FPF algorithm} A particle filter comprises of $N$
stochastic processes (particles) $\{X_t^i:1\le i\le N\}$, where
$X_t^i$ is the state of the
$i^{\text{th}}$-particle at time $t$.  The evolution of $X_t^i$ is
obtained upon empirically approximating the mean-field terms.  In the
following, the finite-$N$ filters for the two cases are described.

\newP{(A) Finite-$N$ stochastic FPF} The evolution of $X_t^i$ is given by
the sde:
\begin{align}
\ud X^i_t &= A X^i_t\ud t + \sigma_B \ud B_t^i + \k^{(N)}_t(\ud Z_t -
\frac{CX^i_t + Cm^{(N)}_t}{2}\ud t) 
\label{eq:Xit-s}
\end{align}
where $\k^{(N)}_t :=\Sigma^{(N)}_tC^\top$; $\{B^i_t\}_{i=1}^N$ are
independent copies of $B_t$; $X^i_0
\stackrel {\text{i.i.d}}{\sim} \mathcal{N}(m_0,\Sigma_0)$ for $i=1,2,\ldots,N$; and the
empirical approximations of the two mean-field terms are as follows:
\begin{align}
m^{(N)}_t&:=\frac{1}{N}\sum_{j=1}^N X^i_t,~ \Sigma^{(N)}_t
:=\frac{1}{N-1}\sum_{j=1}^N (X^i_t-m^{(N)}_t)(X^i_t-m^{(N)}_t)^\top
\label{eq:empr_app_mean_var}
\end{align}

\newP{(B) Finite-$N$ deterministic FPF} The evolution of $X_t^i$ is
given by the sde:
\begin{align}
\ud {X}_t^i &= A \mN_t \ud t + 
\k^{(N)}_t (\ud Z_t - C m^{(N)}_t \ud t) + G_t^{(N)} ({X}_t^i - m^{(N)}_t
    )\ud t
\label{eq:Xit-d}
\end{align}
where (as before) $\k^{(N)}_t :=\Sigma^{(N)}_tC^\top$; $X^i_0
\stackrel {\text{i.i.d}}{\sim} \mathcal{N}(m_0,\Sigma_0)$; empirical
approximations of mean and variance are defined
in~\eqref{eq:empr_app_mean_var}; and 
\begin{align}
{G}_t^{(N)} & = A - \frac{1}{2}\k^{(N)}_t  C +
\frac{1}{2}\sigma_B \sigma_B^\top({\Sigma}_t^{(N)})^{-1} + \Om_t
({\Sigma}_t^{(N)})^{-1}
\label{eq:GtN}
\end{align}

\medskip
The McKean-Vlasov sdes~\eqref{eq:Xbart-stochastic}
and~\eqref{eq:Xbart} are the respective mean-field limits of the
finite-$N$ filters~\eqref{eq:Xit-s} and~\eqref{eq:Xit-d}. 
Our goals in this paper are as follows: 
\begin{romannum}
\item prove the convergence of the
finite-$N$ filter to its mean-field limit; and 
\item obtain bounds on the error
as a function of the number of particles $N$ and the time $t$.  
\end{romannum}
The convergence and error analysis
relies closely on the classical results on stability of the Kalman filter.  These
are summarized next.

\subsection{Stability of the Kalman filter}

The following is assumed throughout the remainder of this paper:

\newP{Assumption (I)} The system $(A,C)$ is detectable and
$(A,\sigma_B)$ is stabilizable.     

\medskip


\begin{theorem}[Lemma 2.2 and Theorem 2.3 in~\cite{ocone1996}]\label{thm:KF-stability}
Consider the Kalman filter~\eqref{eq:KF-mean}-\eqref{eq:KF-variance}
with initial condition $(m_0,\Sigma_0)$.  Then, under Assumption (I):
\begin{romannum}
\item There exists a solution $\Sigma_{\infty}\succ 0$ to the
  algebraic Riccati equation (ARE)
\begin{align}
A \Sigma_\infty +  \Sigma_\infty A^\top + \sigma_B\sigma_B^{\top} -
\Sigma_\infty C^\top C\Sigma_\infty = 0\label{eq:are}
\end{align}
such that $A - \Sigma_\infty C^\top C$ is Hurwitz.  Let 
\begin{equation}
0<\lambda_0 = \text{min} \{ -\text{Real} \; \lambda : \lambda
\; \text{is an eigenvalue of} \; A - \Sigma_\infty C^\top C\}
\label{eq:lambda0_defn}
\end{equation}
\item The error covariance $\Sigma_t \rightarrow \Sigma_{\infty}$
  exponentially fast for {\em any} initial condition
  ${\Sigma}_0$ (not necessarily the prior): 
\[
\lim_{t\rightarrow \infty}  \Fnorm{\Sigma_t -
  \Sigma_{\infty}} \le \text{(const.)} \; e^{-2\lambda_0 t} \rightarrow 0
\]
\item Starting from two initial conditions $(m_0,\Sigma_0)$ and $(\tilde{m_0},\tilde{\Sigma}_0)$, the means converge in
  the following senses:
\begin{align*}
\lim_{t\rightarrow \infty}  {\sf E} [|m_t - \tilde{m}_t|^2] &\; \le \; \text{(const.)} \; e^{-2\lambda_0 t} \rightarrow 0
\\
\lim_{t\rightarrow \infty} |m_t - \tilde{m}_t| e^{\lambda
  t} &\; = \; 0 \quad {\text{a.s.}}
\end{align*}
for all $\lambda\in (0,\lambda_0)$. 
\end{romannum}
\end{theorem}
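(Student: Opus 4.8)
Since the statement is the classical stability theory for the Kalman--Bucy filter and is quoted from~\cite{ocone1996}, my plan is to assemble it from three standard ingredients: reprove parts~(ii)--(iii) and invoke part~(i). Part~(i) is pure linear-systems theory --- under Assumption~(I) the filtering algebraic Riccati equation~\eqref{eq:are} admits a symmetric positive-semidefinite solution $\Sigma_{\infty}$ for which $A-\Sigma_{\infty}C^\top C$ is Hurwitz (the \emph{stabilizing} solution), obtained from the stable invariant subspace of the associated Hamiltonian matrix, or equivalently from the dual LQR problem with system $(A^\top,C^\top)$ and cost matrix $\sigma_B\sigma_B^\top$; this makes $\lambda_0$ in~\eqref{eq:lambda0_defn} well defined and positive, and Assumption~(I) also yields $\Sigma_{\infty}\succ 0$. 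I would cite this rather than reprove it.

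For part~(ii), set $\Delta_t := \Sigma_t-\Sigma_{\infty}$; subtracting~\eqref{eq:are} from~\eqref{eq:KF-variance} and completing the square around $\Sigma_{\infty}$ gives
\[
\dot\Delta_t = A_{\infty}\Delta_t + \Delta_t A_{\infty}^\top - \Delta_t C^\top C\,\Delta_t, \qquad A_{\infty} := A-\Sigma_{\infty}C^\top C .
\]
Since the quadratic term is negative semidefinite, variation of constants gives the one-sided bound $\Delta_t \preceq e^{A_{\infty}t}\Delta_0\,e^{A_{\infty}^\top t}$, which decays at rate $2\lambda_0$ (up to polynomial prefactors) because $A_{\infty}$ is Hurwitz with spectral abscissa $-\lambda_0$. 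For the matching lower bound I would rewrite the same equation with the frozen closed-loop matrix, $\dot\Delta_t = \tilde A_t\Delta_t + \Delta_t A_{\infty}^\top$ with $\tilde A_t := A-\Sigma_tC^\top C$, so that $\Delta_t = \Psi(t,0)\,\Delta_0\,e^{A_{\infty}^\top t}$ where $\Psi(\cdot,\cdot)$ is the transition operator of $\dot\xi = \tilde A_t\xi$; it then suffices to show $\Psi$ is uniformly exponentially stable. Once $\Sigma_t$ is known to remain in a compact set of positive-definite matrices (the upper bound from the one-sided estimate, the lower bound from the comparison/monotonicity theory for the Riccati flow), this follows from the Lyapunov estimate $\frac{\ud}{\ud t}\big(\xi_t^\top\Sigma_t^{-1}\xi_t\big) = -\xi_t^\top\big(C^\top C + \Sigma_t^{-1}\sigma_B\sigma_B^\top\Sigma_t^{-1}\big)\xi_t \le 0$ along $\dot\xi_t = \tilde A_t\xi_t$, together with detectability of $(A,C)$; hence $\Fnorm{\Sigma_t-\Sigma_{\infty}}\le(\text{const.})\,e^{-2\lambda_0 t}$.

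For part~(iii), write~\eqref{eq:KF-mean} in closed-loop form $\ud m_t = \tilde A_t m_t\,\ud t + \k_t\,\ud Z_t$, and the same for the filter started at $(\tilde m_0,\tilde\Sigma_0)$. Subtracting and using the innovation decomposition $\ud Z_t = Cm_t\,\ud t + \ud\nu_t$ (with $\nu$ a $\clZ_t$-Brownian motion for the first filter), the error $e_t := m_t-\tilde m_t$ obeys the linear SDE
\[
\ud e_t = \big(A-\tilde\Sigma_tC^\top C\big)\,e_t\,\ud t + \big(\Sigma_t-\tilde\Sigma_t\big)\,C^\top\,\ud\nu_t .
\]
By part~(ii) both covariances converge to $\Sigma_{\infty}$ at rate $e^{-2\lambda_0 t}$, so the diffusion coefficient is $O(e^{-2\lambda_0 t})$, while the drift generates a uniformly exponentially stable flow $\tilde\Psi$ by the same Lyapunov argument (now with $\tilde\Sigma_t$). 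Variation of constants and the It\^o isometry then give $\E|e_t|^2 \le \snorm{\tilde\Psi(t,0)}^2|e_0|^2 + \int_0^t \Fnorm{\tilde\Psi(t,s)(\Sigma_s-\tilde\Sigma_s)C^\top}^2\,\ud s$, and pulling $e^{-2\lambda(t-s)}$ out of $\snorm{\tilde\Psi(t,s)}^2$ and $e^{-4\lambda_0 s}$ out of the coefficient shows this is $\le(\text{const.})\,e^{-2\lambda_0 t}$ for every $\lambda\in(0,\lambda_0)$. The almost-sure statement follows from this $L^2$ bound by Borel--Cantelli along an integer time grid, together with Doob's $L^2$ maximal inequality to control the increments of $e$ over each unit interval, giving $|m_t-\tilde m_t|\,e^{\lambda t}\to 0$ a.s.\ for every $\lambda\in(0,\lambda_0)$.

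The main obstacle is the uniform-in-time exponential stability of the time-varying closed-loop generator $A-\Sigma_tC^\top C$ (and of its $\tilde\Sigma_t$-analogue) at the sharp rate $\lambda_0$, used in both parts~(ii) and~(iii): one must combine the monotone Lyapunov estimate for $\xi^\top\Sigma_t^{-1}\xi$ with \emph{uniform} two-sided positive-definite bounds on $\Sigma_t$ --- which themselves rely on the detectability and stabilizability in Assumption~(I) via the Riccati comparison theory --- and then identify the decay rate of the transition operator with the spectral abscissa of $A_{\infty}$. Everything downstream (the Riccati subtraction identity, the It\^o-isometry bound, the Borel--Cantelli upgrade to almost-sure convergence) is routine once that stability bound is in hand; and, since the statement is quoted verbatim from~\cite{ocone1996}, one may alternatively simply cite that reference.
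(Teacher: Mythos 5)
The paper does not prove this statement at all: Theorem~\ref{thm:KF-stability} is imported verbatim as background (Lemma~2.2 and Theorem~2.3 of~\cite{ocone1996}) and is used downstream as a black box in the proof of Prop.~\ref{prop:conv_error}. Your fallback option --- simply citing~\cite{ocone1996} --- is therefore exactly what the paper does, and your sketch is extra work the authors never undertake. That said, the sketch is a faithful outline of the classical argument and its key identities check out: the Riccati subtraction does give $\dot\Delta_t = A_\infty\Delta_t + \Delta_t A_\infty^\top - \Delta_t C^\top C\Delta_t$ with the factorization $\Delta_t = \Psi(t,0)\Delta_0 e^{A_\infty^\top t}$ through the time-varying closed loop $A-\Sigma_t C^\top C$; the Lyapunov derivative $\frac{\ud}{\ud t}(\xi_t^\top\Sigma_t^{-1}\xi_t) = -\xi_t^\top(C^\top C + \Sigma_t^{-1}\sigma_B\sigma_B^\top\Sigma_t^{-1})\xi_t$ is correct; and the error SDE $\ud e_t = (A-\tilde\Sigma_t C^\top C)e_t\,\ud t + (\Sigma_t-\tilde\Sigma_t)C^\top\ud\nu_t$ follows from the innovation decomposition as you claim. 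The one point to be careful about if you were to fill in the details: strict positive definiteness $\Sigma_\infty\succ 0$ and the uniform lower bound on $\Sigma_t$ (needed for your Lyapunov argument) do not follow from stabilizability of $(A,\sigma_B)$ alone in general; this is precisely the kind of technical point that the citation to~\cite{ocone1996} is meant to absorb, and you correctly identified the uniform exponential stability of the time-varying closed loop at the sharp rate $\lambda_0$ as the genuine obstacle.
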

\medskip
Throughout this paper, the notation $\Sigma_\infty$ is used to denote
the positive definite solution of the ARE~\eqref{eq:are} and $\lambda_0$ is used to
denote the spectral bound as defined in~\eqref{eq:lambda0_defn}. 


\section{Analysis of empirical mean and covariance}
\label{sec:mean_covariance}

\subsection{Evolution equations}
\label{sec:evol}

We consider the finite-$N$ filters -- Eq.~\eqref{eq:Xit-s} for the
stochastic FPF and Eq.~\eqref{eq:Xit-d} for the deterministic FPF.  The
empirical mean and covariance are defined in
Eq.~\eqref{eq:empr_app_mean_var}.  The error is defined as
\[
\xi^i_t := X^i_t - m^{(N)}_t \quad \text{for}\;\;i=1,2,\ldots,N
\]   
The evolution equations for the mean, covariance, and error are as given
next.  The calculations appear in the 
Appendix~\ref{apdx:mean-var-evolution}.

\newP{(A) Finite-$N$ stochastic FPF} The mean and the covariance evolve
according to the sdes:
\begin{subequations}
\begin{align}
\ud \mN_t &= A\mN_t \ud t + \k^{(N)}_t(\ud Z_t - C\mN_t\ud t) + \frac{\sigma_B}{\sqrt{N}}\ud \tilde{B}_t
\label{eq:mean-evolution-s}\\
\ud \SigN_t &= (A\SigN_t + \SigN_tA^\top + \sigma_B\sigma_B^\top -
{\SigN_t}C^\top C\SigN_t)\ud t + \frac{\ud M_{t}}{\sqrt{N}}
\label{eq:var-evolution-s}
\end{align}
\end{subequations}
where $\tilde{B}_t := \frac{1}{\sqrt{N}}\sum_{i=1}^N B^i_t$ is a
standard Wiener process and $\ud M_t =\frac{\sqrt{N}}{N-1}\sum_{i=1}^N (\xi^i_t {\ud B^i_t}^\top \sigma_B^\top +\sigma_B \ud B^i_t {\xi^i_t}^\top)$ is a matrix-valued martingale with $\Expect[\ud M_t \ud M_t^\top] = (\frac{N}{N-1})^2(\SigN_t\sigma_B\sigma_B^\top + \sigma_B\sigma_B^\top \SigN_t+2\trace(\sigma_B\sigma_B^\top)\SigN_t)\ud t$.

The sde for the error is given by
\begin{align*}
\ud \xi^i_t &= (A - \frac{1}{2}\mathsf{K}^{(N)}_tC)\xi^i_t \ud t +
\sigma_B\ud B^i_t - \frac{\sigma_B}{\sqrt{N}}\ud \tilde{B}_t
\end{align*}

\newP{(B) Finite-$N$ deterministic FPF} The evolution equations are as
follows:
\begin{subequations}
\begin{align}
\ud \mN_t &= A\mN_t \ud t + \k^{(N)}_t(\ud Z_t - C\mN_t\ud t)
\label{eq:mean-evolution-d}\\
\frac{\ud \SigN_t}{\ud t} &= A\SigN_t + \SigN_tA^\top
+\sigma_B\sigma_B^\top- {\SigN_t}C^\top
C\SigN_t\label{eq:var-evolution-d}\\
\frac{\ud \xi^i_t}{\ud t} &= G_t^{(N)}\xi^i_t \nonumber
\end{align}
\end{subequations}
where $G_t^{(N)}$ is defined in~\eqref{eq:GtN}.

\medskip

In the remainder of this paper, the focus is on the error analysis of
the deterministic finite-$N$ FPF algorithm.  The analysis is simpler
because the equations for empirical mean and
covariance~\eqref{eq:mean-evolution-d}-\eqref{eq:var-evolution-d} are
identical to the Kalman
filter~\eqref{eq:KF-mean}-\eqref{eq:KF-variance}. The analysis is seen as
the first step towards the analysis of the more complicated stochastic
FPF which also includes an additional $O(N^{-\half})$ stochastic term
(fluctuation) due to the process noise.  


\begin{remark}
Even though the fluctuations scale as $O(N^{-\half})$, the analysis is
challenging as has been noted in literature (see the remark after Theorem
3.1 in~\cite{delmoral2016stability}).  Error analysis of the ensemble
Kalman filter with noise terms appears
in~\cite{delmoral2016stability} under certain additional techical
assumptions.  Analysis of the deterministic FPF closely follows the
stability theory for Kalman filter.  Related analysis appears in the
recent work~\cite{jana2016stability}.      
\end{remark}

\subsection{Error Analysis}
\label{sec:error_analysis}

\newP{Assumption (II)} 
The initial covariance $\Sigma_0^{(N)}$ is invertible.     

\medskip

The main result for the finite-$N$ deterministic FPF is as follows with
the proof given in Appendix~\ref{apdx:mean-var-error}.

\medskip

\begin{proposition} \label{prop:conv_error}
Consider the Kalman filter~\eqref{eq:KF-mean}-\eqref{eq:KF-variance}
initialized with the prior ${\cal N}(m_0,\Sigma_0)$ and the
finite-$N$ deterministic FPF~\eqref{eq:Xit-d} initialized with
$X_0^i\stackrel{\text{i.i.d}}{\sim} {\cal N}(m_0,\Sigma_0)$ for $i=1,2,\ldots,N$.  Under
Assumption (I)-(II), the following characterizes the convergence and
error properties of the empirical mean and covariance
$(m_t^{(N)},\Sigma_t^{(N)})$ obtained from the finite-$N$ filter to the mean and
covariance $(m_t,\Sigma_t)$ obtained from the Kalman filter:
\begin{romannum}
\item{Convergence:} For any finite $N$, as $t\rightarrow\infty$:
\begin{align*}
\lim_{t \to \infty} e^{\lambda t}|\mN_t - m_t| &= 0\quad \text{a.s}\\
\lim_{t \to \infty} e^{2\lambda t}\Fnorm{\SigN_t - \Sigma_t} &= 0\quad \text{a.s}
\end{align*} 
for all $\lambda\in (0,\lambda_0)$. 
\item{Mean-squared error:} For any $t>0$, as $N\rightarrow\infty$:
\begin{subequations}
\begin{align}
\Expect[|\mN_t-m_t|^2]&\leq \frac{(c_1\Expect[|X_0-m_0|^2]+c_2\Expect[|X_0-m_0|^4])e^{-2\lambda_0 t}}{N}  \label{eq:mean-estimate}\\
\Expect[\Fnorm{\SigN_t-\Sigma_t}^2]&\leq \frac{c_3\Expect[|X_0-m_0|^4]e^{-4 \lambda_0 t}}{N}   \label{eq:var-estimate}
\end{align} 
\end{subequations}
where $c_1,c_2,c_3$ are positive constants.  For the scalar ($d=1$)
case, one has the following explicit formulae for these constants:
\begin{align*}
c_1=e^{|\log(\beta)|},~~ c_2=\frac{C^2}{2\lambda_0}\beta^2e^{|\log(\beta)|}(1-e^{-2\lambda_0t}),~~ c_3=\beta^2
\end{align*}
where $\lambda_0=(A^2+\sigma_B^2C^2)^{\half}$ and 
$\beta =(\frac{2\lambda_0}{\lambda_0-A})^2$. 
\end{romannum} 


\end{proposition}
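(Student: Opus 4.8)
\medskip
\noindent\emph{Proof approach.} The starting point is \eqref{eq:mean-evolution-d}--\eqref{eq:var-evolution-d}: the empirical mean and covariance of the finite-$N$ deterministic FPF solve \emph{exactly} the Kalman filter equations \eqref{eq:KF-mean}--\eqref{eq:KF-variance}, the only difference being that the initial pair is the random $(\mN_0,\SigN_0)$ rather than the prior $(m_0,\Sigma_0)$. Conditioning on the initial ensemble $\{X_0^i\}_{i=1}^N$ (a.s.\ a fixed configuration with $\SigN_0\succ0$ under Assumption~(II)), $(\mN_t,\SigN_t)$ is thus literally a Kalman--Bucy filter from $(\mN_0,\SigN_0)$ driven by the same $Z$. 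Part~(i) then follows directly from \Theorem{thm:KF-stability}: item~(ii) applied to both covariance trajectories gives $\Fnorm{\SigN_t-\Sigma_t}\le\Fnorm{\SigN_t-\Sigma_\infty}+\Fnorm{\Sigma_t-\Sigma_\infty}\le(\text{const})\,e^{-2\lambda_0 t}$, hence $e^{2\lambda t}\Fnorm{\SigN_t-\Sigma_t}\to0$ for $\lambda<\lambda_0$; and item~(iii), applied with the two initial conditions, gives $e^{\lambda t}|\mN_t-m_t|\to0$ a.s.\ for $\lambda<\lambda_0$. Fubini removes the conditioning on the ensemble.

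\medskip
\noindent For part~(ii) I would track the errors $E_t:=\SigN_t-\Sigma_t$ and $\delta_t:=\mN_t-m_t$. Subtracting the Riccati equations (and writing $\SigN_t=\Sigma_t+E_t$ in the quadratic term) gives a \emph{linear} time-varying equation
\[
\frac{\ud E_t}{\ud t}=(A-\Sigma_tC^\top C)\,E_t+E_t\,(A-\SigN_tC^\top C)^\top,
\]
and subtracting the mean equations, with $\ud\nu_t:=\ud Z_t-Cm_t\,\ud t$ the innovation (a $\clZ_t$-Wiener process by the Kalman filter theory),
\[
\ud\delta_t=(A-\SigN_tC^\top C)\,\delta_t\,\ud t+E_t C^\top\ud\nu_t .
\]
Two structural facts are used repeatedly: $E_t$ and the coefficient $A-\SigN_tC^\top C$ depend only on $\SigN_0$ (the Riccati ode has no observation input), hence are independent of $\nu$; and $\delta_0=\mN_0-m_0$ is independent of $\nu$.

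\medskip
\noindent Let $\Phi^{(1)}_{t,s}$ and $\Phi^{(2)}_{t,s}$ be the transition matrices of $\dot\psi=(A-\Sigma_tC^\top C)\psi$ and $\dot\psi=(A-\SigN_tC^\top C)\psi$ respectively. Since $A-\Sigma_\infty C^\top C$ is Hurwitz with spectral bound $\lambda_0$ (\Theorem{thm:KF-stability}(i)) and the covariances converge exponentially, Kalman-filter stability gives $\snorm{\Phi^{(i)}_{t,s}}\le M\,e^{-\lambda_0(t-s)}$. The linear equation above integrates to $E_t=\Phi^{(1)}_{t,0}E_0(\Phi^{(2)}_{t,0})^\top$, whence $\Fnorm{E_t}\le M^2e^{-2\lambda_0 t}\Fnorm{E_0}$ and $\Expect[\Fnorm{E_t}^2]\le M^4e^{-4\lambda_0 t}\Expect[\Fnorm{\SigN_0-\Sigma_0}^2]$; combined with the standard bound $\Expect[\Fnorm{\SigN_0-\Sigma_0}^2]\le(\text{const})\,\Expect[|X_0-m_0|^4]/N$ for the variance of a sample covariance, this gives \eqref{eq:var-estimate}. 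Likewise $\delta_t=\Phi^{(2)}_{t,0}\delta_0+\int_0^t\Phi^{(2)}_{t,s}E_sC^\top\ud\nu_s$; conditioning on $\{X_0^i\}$ makes the stochastic integral a mean-zero martingale, so the cross term vanishes and It\^o's isometry gives
\[
\Expect[|\delta_t|^2]=\Expect\!\big[|\Phi^{(2)}_{t,0}\delta_0|^2\big]+\Expect\!\Big[\int_0^t\Fnorm{\Phi^{(2)}_{t,s}E_sC^\top}^2\ud s\Big];
\]
inserting the transition-matrix bound, the bound on $\Expect[\Fnorm{E_s}^2]$, the identity $\Expect[|\delta_0|^2]=\trace(\Sigma_0)/N$, and the elementary integral $\int_0^t e^{-2\lambda_0(t-s)}e^{-4\lambda_0 s}\ud s=e^{-2\lambda_0 t}(1-e^{-2\lambda_0 t})/(2\lambda_0)$, yields \eqref{eq:mean-estimate} with the stated structure for the constants. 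In the scalar case every constant becomes explicit: the substitution $w_t=1/(\Sigma_t-\Sigma_\infty)$ linearizes the Riccati ode to $\dot w_t=2\lambda_0 w_t+C^2$, and together with $\Sigma_\infty=(A+\lambda_0)/C^2$ and the fact that $\Sigma_\infty-\Sigma_t$ and $\Sigma_\infty-\SigN_t$ keep a fixed sign along the flow, this yields the \emph{uniform} bounds $|\Phi^{(i)}_{t,s}|\le\sqrt\beta\,e^{-\lambda_0(t-s)}$ with $\beta=(2\lambda_0/(\lambda_0-A))^2$, from which $c_1,c_2,c_3$ follow.

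\medskip
\noindent The step I expect to be the main obstacle is the transition-matrix estimate $\snorm{\Phi^{(2)}_{t,s}}\le M\,e^{-\lambda_0(t-s)}$ with the constant $M$ controlled uniformly (or with enough moments) in the \emph{random} initial covariance $\SigN_0$ --- this is precisely what lets the expectations above scale like $1/N$. In $d=1$ this is handled outright by the explicit Riccati solution above; for $d>1$ it rests on the quantitative matrix-Riccati stability theory, which is why only the existence of $c_1,c_2,c_3$ is asserted in that case.
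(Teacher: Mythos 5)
Your proposal is correct and, for part (i) and for the mean-error estimate in part (ii), follows essentially the same route as the paper: identify $(\mN_t,\SigN_t)$ with a Kalman--Bucy filter started from the random pair $(\mN_0,\SigN_0)$, invoke \Theorem{thm:KF-stability} for the almost-sure statements, write $\ud \delta m_t=(A-\SigN_tC^\top C)\delta m_t\ud t+\delta\Sigma_t C^\top \ud I_t$ with $I$ the innovation Wiener process, use variation of constants, independence of $\SigN_0$ from the innovation, and the It\^o isometry, and close with $\Expect[|\delta m_0|^2]=\trace(\Sigma_0)/N$ and $\Expect[\Fnorm{\delta\Sigma_0}^2]\le\Expect[|X_0-m_0|^4]/N$. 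The one genuine methodological difference is the covariance error: you subtract the two Riccati equations to obtain the exact linear (Sylvester-type) equation $\dot E_t=(A-\Sigma_tC^\top C)E_t+E_t(A-\SigN_tC^\top C)^\top$ and bound $E_t=\Phi^{(1)}_{t,0}E_0(\Phi^{(2)}_{t,0})^\top$ by transition-matrix decay, whereas the paper writes $\SigN_t=f(\SigN_0,t)$ using the closed-form Riccati solution (the $1$-d formula in \eqref{eq:SigNt}, the Brockett formula $\Sigma_\infty+e^{F_\infty t}D_t^{-1}e^{F_\infty^\top t}$ in the vector case) and bounds the Lipschitz constant of $f(\cdot,t)$ by $\beta e^{-2\lambda_0 t}$. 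These are two faces of the same estimate---the Lipschitz constant of the Riccati flow map is exactly the norm of the sandwich of the two transition matrices---so your version is an equivalent, arguably cleaner, presentation. You also correctly isolate the real crux, namely that the decay constant $M$ in $\snorm{\Phi^{(2)}_{t,s}}\le Me^{-\lambda_0(t-s)}$ must be controlled uniformly over the random $\SigN_0$; the paper resolves this exactly as you anticipate, via the explicit solution in $d=1$ (yielding $\sqrt{\beta}$, with $\beta=(2\lambda_0/(\lambda_0-A))^2$, by bounding $\int_s^t(A-C^2\SigN_\tau)\ud\tau\le-\lambda_0(t-s)+\tfrac12|\log\beta|$) and via a supremum over $\SigN_0$ of $\snorm{D_t^{-1}}$ in the vector case, which is why only the existence of $c_1,c_2,c_3$ is stated there.
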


\medskip

\begin{remark}
Asymptotically (as $t \to \infty$) the empirical mean and variance of
the finite-$N$ filter becomes exact.  This is because of the stability
of the Kalman filter whereby the filter forgets the initial
condition. In fact, the i.i.d assumption on the initial condition
$X_0^i$ is not necessary to obtain this conclusion.  
\end{remark}
\begin{remark}
The assumption (II) on the invertibility of
the initial covariance $\SigN_0$ can be relaxed to $\text{Ker}(\SigN_0)\subseteq\text{Ker}(\sigma_B\sigma_B^\top)$ (in the
proof, one works with the pseudo-inverse instead of the inverse).  The
latter is important because ensemble Kalman filters are often
simulated in high-dimensional settings where the number of particles $N$
may be smaller than the dimension $d$ of the problem~\cite{reich11}.    
\end{remark}



    



\subsection{Numerics}

Consider a scalar ($d=1$) linear filtering problem~\eqref{eq:dyn}-\eqref{eq:obs}
with parameters $A=0.1$, $\sigma_B=1.0$, $C=1.0$, $m_0=3.0$, and $\Sigma_0=5.0$.  
The stochastic linear FPF~\eqref{eq:Xit-s} and the deterministic
linear FPF~\eqref{eq:Xit-d} are simulated for this problem.  The
ground truth $(m_t,\Sigma_t)$ is obtained by simulating a Kalman
filter~\eqref{eq:KF-mean}-\eqref{eq:KF-variance}.   

Figure~\ref{fig:traj-FPF} depicts the results for a single simulation
of the deterministic FPF algorithm with $N=100$ particles. The
trajectory $X_t^i$ of the $N$ particles along with the empirical mean $\mN_t$, the
empirical variance $\SigN_t$, the conditional mean $m_t$, and the
conditional variance $\Sigma_t$ are depicted in the figure.
Consistent with the conclusion of the Prop.~\ref{prop:conv_error},
both the empirical mean and variance are seen to converge
exponentially fast.
    
Figure~\ref{fig:conv-t} and Figure~\ref{fig:conv-N} provide a simulation-based
illustration of the mean-squared error as a function of $t$ and $N$,
respectively.  A description of these plots is as follows:
\begin{romannum}
\item 
An empirical estimate of the error
$\Expect[|\mN_t-m_t|^2]$ as a function of time is depicted in part~(b)
of the figure.  The empirical estimates are obtained by running
$M=1000$ simulations of the FPF with $N=100$ particles.  The
upper bound is also depicted.  Consistent with the result in
Prop.~\ref{prop:conv_error},  the error converges exponentially fast
to zero for the deterministic FPF.  For the stochastic FPF, the error
reaches a steady state value because of the presence of the process
noise term.    
\item In the part~(c) of the figure, empirical estimates of the error
  are depicted now as a function of $N$ for a fixed $t=2.0$.  As
  before, these are estimated with $M=1000$ simulations.  For both the
  deterministic and the stochastic cases, the error is seen to scale as
  $O(\frac{1}{N})$ consistent with the scaling given in
  Prop.~\ref{prop:conv_error}. 
\end{romannum}





\begin{figure*}[t]
\begin{tabular}{ccc}
\subfigure[]{
\includegraphics[width=0.66\columnwidth]{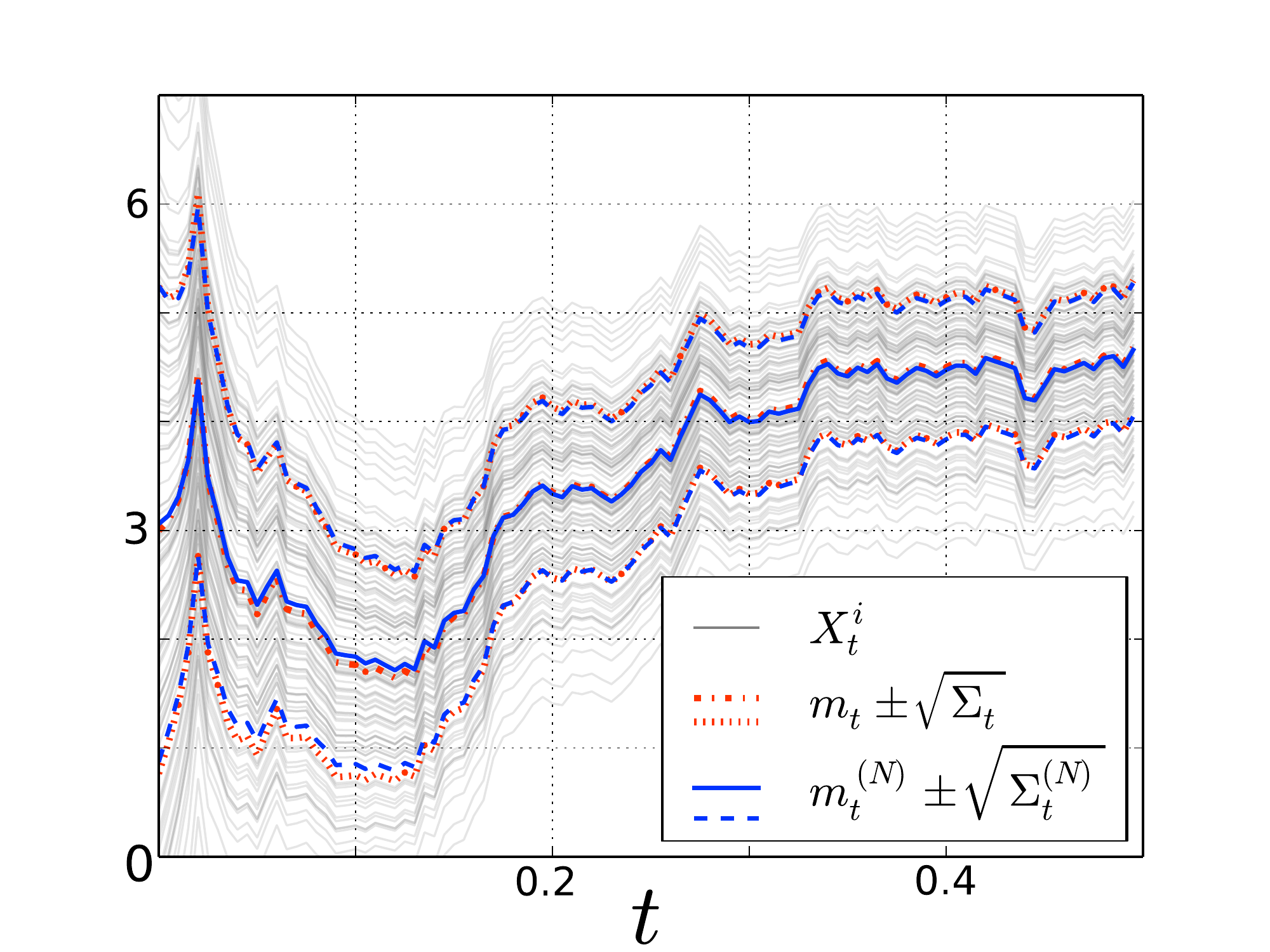}
\label{fig:traj-FPF}
}&
\subfigure[]{
\includegraphics[width=0.66\columnwidth]{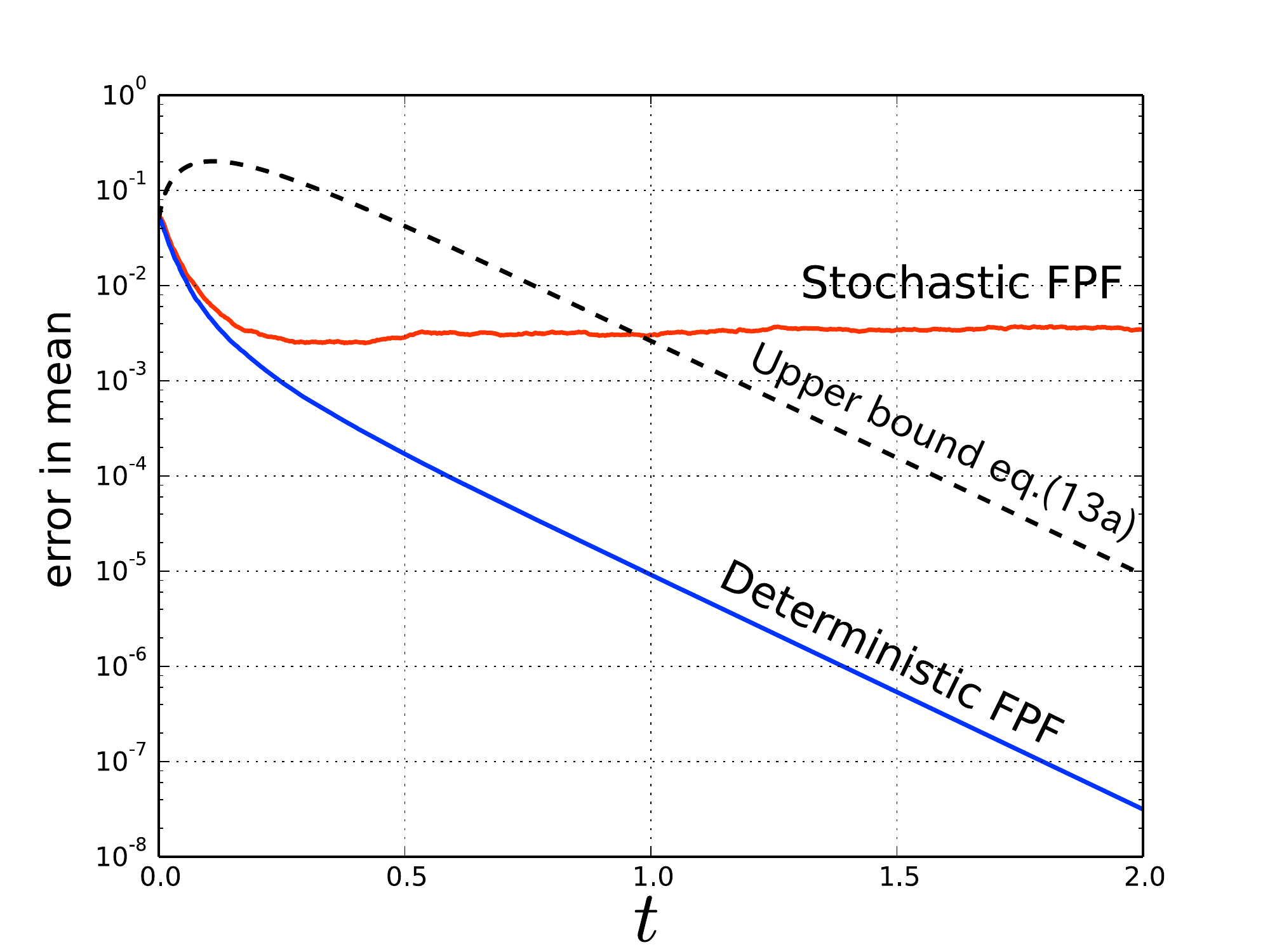}
\label{fig:conv-t}
}&
\subfigure[]{
\includegraphics[width=0.66\columnwidth]{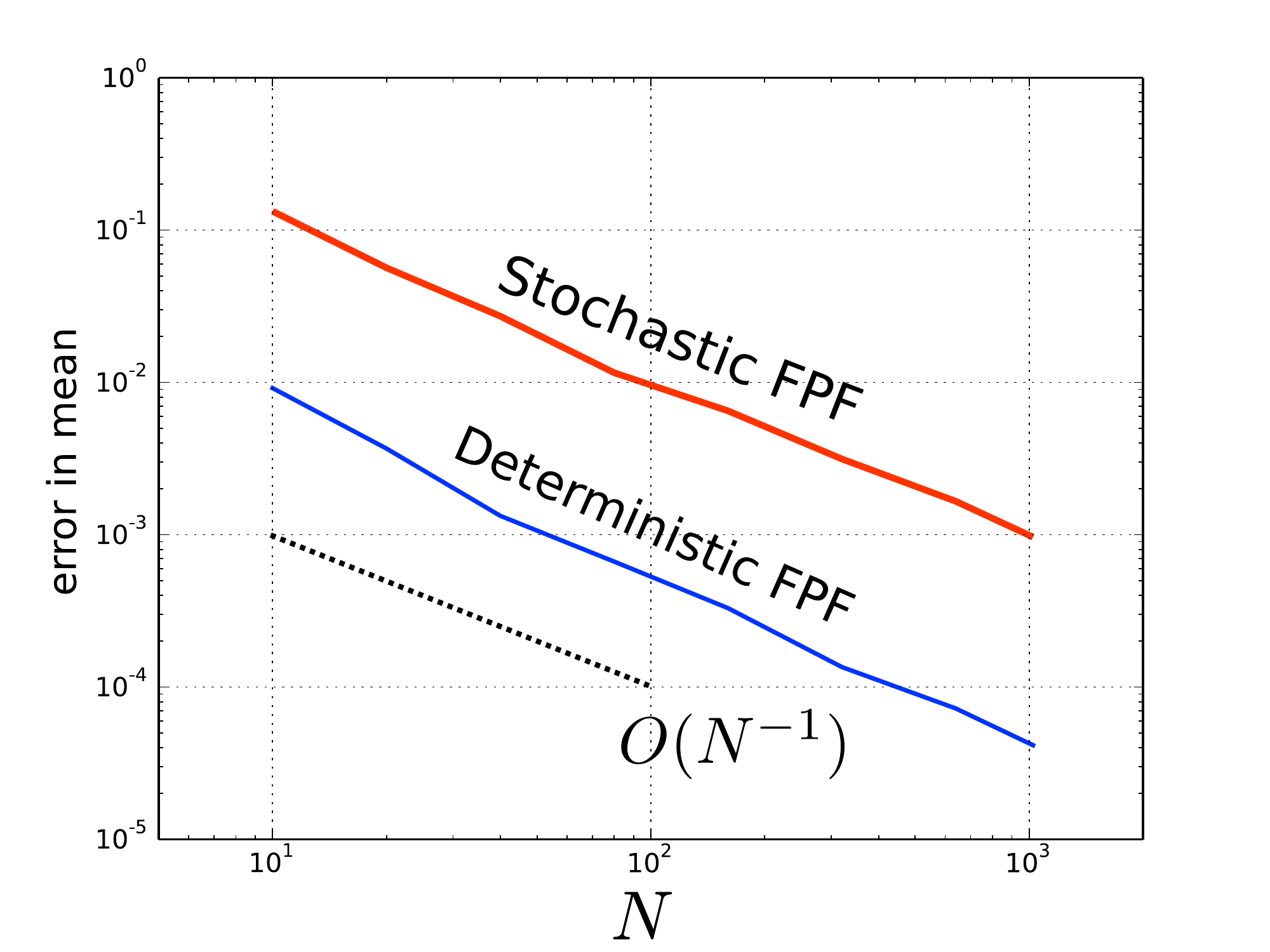}
\label{fig:conv-N}
}
\end{tabular}
\caption{Results of numerical experiment: (a) Trajectories of $N=100$
  particles together with the empirical mean, the empirical variance,
  the (true) conditional mean, and the conditional variance for a
  single simulation; (b) Monte-Carlo estimate of the mean-squared
  error (in estimating the mean) as a function of time $t$; (c) Monte-Carlo estimate of the mean-squared
  error as a function of $N$.  The Monte-Carlo estimates are obtained
  by running $M=1000$ simulations.}
\end{figure*}
\section{Propagation of chaos}
\label{sec:poa}

At the initial time $t=0$, the particles $\{X_0^i\}_{i=1}^N$ are
sampled i.i.d. from the prior distribution.  In any finite-$N$
implementation of the filter, the i.i.d. property is
destroyed for $t>0$ because of the interactions:  For the linear
FPFs~\eqref{eq:Xit-s} and~\eqref{eq:Xit-d}, the interaction terms
are a function of the empirical mean $\mN_t$ and the empirical covariance
$\SigN_t$.  Since these terms depend upon all the particles, the
$i^{\text{th}}$ particle in the population is coupled to/interacts
with (the randomness of) all other particles.  
Even though the particles are no longer i.i.d for any finite choice of $N$, one (formally) expects the particles to become approximately
i.i.d (in a sense that needs to be made precise) for large $N$.
Intuitively, this is because as $N \to \infty$, $\mN_t \to m_t$ and
$\SigN_t \to \Sigma_t$.  And for the limiting mean-field model, the
particles are i.i.d for $t>0$ provided they are i.i.d. at the initial
time $t=0$.  The phenomenon is referred to as the {\em
  propagation of chaos} whereby the chaos (i.i.d property of the
population) propagates through time.

The mathematical definitions are as follows:  Denote  $E:=\Re^d\times[0,\infty)$.  Let $\mu_N$ be
the probability measure on $E^N$ associated with the process
$(X^1,\ldots,X^N)$. Let $\bar{\mu}$ be the probability measure on $E$
associated with the mean-field solution $\bar{X}$.  Then $\mu_N$ is
said to be {\em $\bar{\mu}$-chaotic} if 
\begin{equation*}
\pi_k\mu_N \overset{\text{weak}}{\longrightarrow} \bar{\mu}^{(k)}\quad \text{as}\quad N\to \infty
\end{equation*} 
where $\pi_k \mu_N$ is the $k$-marginal distribution,
$\bar{\mu}^{(k)}$ is the $k$-fold product, and the convergence is in
the weak sense.  A somewhat easier formulation of this condition
appears in~\cite[Proposition 2.2]{sznitman1991} as
\begin{equation}
\lim_{N \to \infty} \Expect\left[\left|\frac{1}{N}\sum_{i=1}^N f(X^i)  - \Expect[f(\bar{X})] \right|^2\right]=  0
\label{eq:LLN}
\end{equation} 
for all bounded functionals $f:E\to\Re$.  

\begin{remark} 
Some difficulties in carrying out the propagation of chaos analysis
for the FPF are as follows:  (i) The drift term in the evolution equation for the covariance
is not Lipschitz; For the stochastic FPF~\eqref{eq:Xit-s}, the
noise terms (the martingale $M_t$) depend upon the state.  In our
analysis, we circumvent some of these difficulties by limiting to the
scalar ($d=1$) case where explicit solution of the covariance is
available.  As was the case in Sec.~\ref{sec:error_analysis}, we focus on the deterministic FPF where the terms due to
the process noise are not present.  Even in this special case, we show
the convergence for the marginal distribution {\em only} for fixed
time $t>0$.   
That is, we show 
\begin{equation}
\lim_{N \to \infty} \Expect\left[\left|\frac{1}{N}\sum_{i=1}^N
    f(X^i_t)  - \Expect[f(\bar{X}_t)] \right|^2\right]=  0
\label{eq:LLN-t}
\end{equation}
for all bounded functions $f:\Re^d \to \Re$.  Extension to estimates
that are uniform in time in the general settings is a subject of continuing work.    
\end{remark}

Derivation of error estimates involve construction of $N$ independent
copies of the mean-field equation~\eqref{eq:Xbart} corresponding to the
deterministic FPF~\eqref{eq:Xit-d}.  Consistent with
our convention to denote mean-field variables with a bar, the
stochastic processes are denoted as $\{\bar{X}_t^i:1\le i \le N\}$
where $\bar{X}^i_t$ denotes the state of the $i^{\text{th}}$ particle
at time $t$.  The particle evolves according to the mean-field
equation~\eqref{eq:Xbart} as
\begin{align}
\ud \bar{X}^i_t & = A \bar{m}_t \ud t + 
\bar{\k}_t(\ud Z_t - C{\bar{m}_t}\ud t) + \bar{G}_t(\bar{X}_t^i -\bar{m}_t)\
 \label{eq:barXit}
\end{align} 
where $\bar{\k}_t =\bar{\Sigma}_tC^\top$ is the Kalman
gain and the initial condition $\bar{X}^i_0=X^i_0$ -- the initial condition of
the $i^{\text{th}}$ particle in the finite-$N$ FPF~\eqref{eq:Xit-d}. 
%
The mean-field process $\bar{X}^i_t$ is thus coupled to $X^i_t$
through the initial condition.
The following Proposition characterizes the error between $X^i_t$ and
$\bar{X}^i_t$ (the estimate is essential for the propagation of
chaos analysis). The proof appears in the
Appendix~\ref{apdx:prop-chaos}.

%

\begin{proposition} \label{prop:prop-chaos}
Consider the stochastic processes $X^i_t$ and $\bar{X}^i_t$ whose
evolution is defined 
according to the deterministic FPF~\eqref{eq:Xit-d} and its mean-field
model~\eqref{eq:barXit}, respectively. The initial condition $X_0^i\stackrel{\text{i.i.d}}{\sim} {\cal N}(m_0,\Sigma_0)$ for $i=1,2,\ldots,N$ and the dimension $d=1$.  Then under
Assumptions~(I)-(II):  
\begin{romannum}
\item The explicit solution is given as
\begin{subequations}
\begin{align*}
X^i_t  &= \mN_t + \left(\frac{\SigN_t}{\SigN_0}\right)^{\half}(X^i_0-\mN_0)\\
\bar{X}^i_t  &= m_t + \left(\frac{\Sigma_t}{\Sigma_0}\right)^\half(X^i_0-m_0)
\end{align*}
\end{subequations}
\item For a fixed $t>0$, in the limit as $N \to \infty$
\begin{equation}
\Expect[|X^i_t-\bar{X}^i_t|^2] \leq
\frac{\text{(const.)}}{{N}}
\label{eq:estimate_POA}
\end{equation}  
\end{romannum}
\end{proposition}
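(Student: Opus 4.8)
The plan is to obtain the closed-form expressions of part~(i) by integrating the decoupled system of ordinary differential equations for the particle errors, and then to derive part~(ii) by substituting those expressions and bounding the three resulting fluctuation terms using the estimates already recorded in Prop.~\ref{prop:conv_error}, together with elementary facts about the scalar Riccati equation and about $\chi^2$ random variables.

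For part~(i): as derived in Sec.~\ref{sec:evol}, the particle error $\xi^i_t := X^i_t - \mN_t$ of the finite-$N$ deterministic FPF solves the scalar linear ODE $\dot\xi^i_t = G^{(N)}_t\xi^i_t$ with $\xi^i_0 = X^i_0 - \mN_0$. Differentiating $\SigN_t = \tfrac{1}{N-1}\sum_i (\xi^i_t)^2$ and inserting this ODE gives $\dot\SigN_t = 2 G^{(N)}_t\SigN_t$; since $\SigN_0>0$ by Assumption~(II) and the scalar Riccati flow keeps $\SigN_t$ strictly positive, $G^{(N)}_t$ is well defined and continuous, and integration gives $\SigN_t = \SigN_0\exp\!\big(2\!\int_0^t G^{(N)}_s\,\ud s\big)$. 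Comparing with $\xi^i_t = \xi^i_0\exp\!\big(\!\int_0^t G^{(N)}_s\,\ud s\big)$ yields $\xi^i_t = (\SigN_t/\SigN_0)^{\half}\xi^i_0$, i.e.\ the first formula. Repeating the computation for the mean-field model~\eqref{eq:barXit} --- whose conditional mean satisfies $\ud\bar m_t = A\bar m_t\,\ud t + \bar\k_t(\ud Z_t - C\bar m_t\,\ud t)$, so that $\bar\xi^i_t := \bar X^i_t - \bar m_t$ obeys $\dot{\bar\xi}^i_t = \bar G_t\bar\xi^i_t$ --- gives $\bar X^i_t = \bar m_t + (\bar\Sigma_t/\bar\Sigma_0)^{\half}(\bar X^i_0 - \bar m_0)$. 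Prop.~\ref{thm:consistency-FPF-lin} gives $\bar m_t = m_t$ and $\bar\Sigma_t = \Sigma_t$ for all $t\ge 0$, and $\bar X^i_0 = X^i_0$ by construction, so the second formula follows.

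For part~(ii) I set $\alpha^{(N)}_t := (\SigN_t/\SigN_0)^{\half}$, $\alpha_t := (\Sigma_t/\Sigma_0)^{\half}$ and use the decomposition
\begin{equation*}
X^i_t - \bar X^i_t \;=\; (\mN_t - m_t) \;+\; \alpha^{(N)}_t(m_0 - \mN_0) \;+\; (\alpha^{(N)}_t - \alpha_t)(X^i_0 - m_0),
\end{equation*}
reducing the claim to an $O(N^{-1})$ mean-square bound on each summand. The first summand is exactly controlled by~\eqref{eq:mean-estimate}. For the other two I plan to assemble: (a) the i.i.d.\ moment bounds $\Expect[|m_0 - \mN_0|^{2k}] = O(N^{-k})$ and $\Expect[|\Sigma_0 - \SigN_0|^{2k}] = O(N^{-k})$, since $\sqrt N(m_0 - \mN_0)$ is a normalized sum of i.i.d.\ centered variables and $(N-1)\SigN_0/\Sigma_0\sim\chi^2_{N-1}$; (b) the deterministic a priori bounds $\SigN_t\le c_t(1+\SigN_0)$ (Gronwall on $\dot\SigN_t\le 2A\SigN_t+\sigma_B^2$, $c_t$ deterministic) and $0<\inf_{s\ge0}\Sigma_s\le\sup_{s\ge0}\Sigma_s<\infty$ (from $\Sigma_t\to\Sigma_\infty\succ0$ of Thm.~\ref{thm:KF-stability}, continuity and positivity of the Riccati flow); (c) the deterministic Lipschitz estimate $|\SigN_t-\Sigma_t|\le e^{2A_+t}|\SigN_0-\Sigma_0|$, $A_+:=\max(A,0)$, from $\frac{\ud}{\ud t}(\SigN_t-\Sigma_t)^2\le 4A_+(\SigN_t-\Sigma_t)^2$ (both Riccati solutions being nonnegative); and (d) the negative-moment bounds $\Expect[(\SigN_0)^{-p}] = O(1)$ for $N-1>2p$, from the explicit negative moments of $\chi^2_{N-1}$. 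Given these, the term $\alpha^{(N)}_t(m_0-\mN_0)$ is bounded via Cauchy--Schwarz and $(\alpha^{(N)}_t)^4 = (\SigN_t/\SigN_0)^2 \le 2c_t^2(1+(\SigN_0)^{-2})$; and the term $(\alpha^{(N)}_t-\alpha_t)(X^i_0-m_0)$ via $|\sqrt a-\sqrt b|\le|a-b|/\sqrt b$ with $b=\Sigma_t/\Sigma_0$ bounded below, the identity $\SigN_t/\SigN_0-\Sigma_t/\Sigma_0 = (\SigN_t-\Sigma_t)/\SigN_0 + \Sigma_t(\Sigma_0-\SigN_0)/(\Sigma_0\SigN_0)$, the Lipschitz bound~(c), and a H\"older inequality with three exponents equal to $3$, which reduces everything to products $\Expect[|\SigN_0-\Sigma_0|^6]^{1/3}\,\Expect[(\SigN_0)^{-6}]^{1/3}\,\Expect[(X^i_0-m_0)^6]^{1/3} = O(N^{-1})$.

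The step I expect to be the main obstacle is the control of the inverse powers of $\SigN_0$ that enter $\alpha^{(N)}_t$ and $\alpha^{(N)}_t-\alpha_t$, i.e.\ ingredient~(d): one has to check that the negative moments of the random initial empirical variance stay bounded as $N\to\infty$. This is the only point where quantitative probabilistic input beyond the deterministic Kalman/Riccati stability theory is needed, and it rests squarely on the i.i.d.\ (hence $\chi^2$) structure of the initial ensemble; since the statement fixes $t$ and sends $N\to\infty$, having~(d) only for $N$ above a fixed threshold suffices, which is exactly what the $\chi^2$ negative-moment formula delivers.
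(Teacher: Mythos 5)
Your proof is correct and follows the same overall architecture as the paper's. Part (i) is obtained exactly as in the paper: integrate $\dot\xi^i_t=G^{(N)}_t\xi^i_t$, identify $\int_0^t G^{(N)}_s\,\ud s=\half\log(\SigN_t/\SigN_0)$ from the covariance ODE $\dot\SigN_t=2G^{(N)}_t\SigN_t$, and repeat for the mean-field model using exactness ($\bar m_t=m_t$, $\bar\Sigma_t=\Sigma_t$). Part (ii) rests on the same three-way split of $X^i_t-\bar X^i_t$ into the mean error $\mN_t-m_t$ (controlled by \eqref{eq:mean-estimate}), a variance-ratio factor times $m_0-\mN_0$, and the difference of variance ratios times the initial particle deviation; your grouping differs from the paper's only in which of $\alpha^{(N)}_t$, $\alpha_t$ multiplies $m_0-\mN_0$ and in replacing $X^i_0-\mN_0$ by $X^i_0-m_0$, which is immaterial. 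The genuine difference is how the singular factor $(\SigN_0)^{-1}$ hidden in $(\SigN_t/\SigN_0)^{1/2}$ is tamed. The paper truncates on the event $S=\{\SigN_0\ge\half\Sigma_0\}$, uses a Lipschitz bound for $g(x,t)=\sqrt{f(x,t)/x}$ on $S$ (with the explicit Riccati solution supplying a constant proportional to $e^{-2\lambda_0 t}$), and disposes of $S^c$ via $({\xi^i_0})^2\le N\SigN_0$ together with a Chernoff bound for $\P(S^c)$. You instead bound $\Expect[(\SigN_0)^{-p}]$ directly from the $\chi^2_{N-1}$ law of $(N-1)\SigN_0/\Sigma_0$ and close with a three-factor H\"older inequality. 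Both devices draw on the same i.i.d.\ Gaussian structure of the initial ensemble; yours avoids the event splitting and is explicit about the threshold $N-1>2p$ needed for the negative moments, while the paper's truncation would also survive non-Gaussian initial laws whose empirical variance has a sub-exponential lower tail but no closed-form negative moments. A minor further difference: your Lipschitz estimate $|\SigN_t-\Sigma_t|\le e^{2A_+t}|\SigN_0-\Sigma_0|$ comes from a Gronwall comparison rather than the explicit Riccati solution; it suffices for the fixed-$t$ claim but, unlike the paper's constant $\beta e^{-2\lambda_0 t}$, does not decay in $t$, so it would not carry over to any uniform-in-time refinement.
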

\medskip

The estimate~\eqref{eq:estimate_POA} is used to prove the following
important result that the empirical distribution of the particles in
the linear FPF converges weakly to the true posterior distribution.
Its proof appears in the Appendix~\ref{apdx:prop-chaos}.

\begin{corollary} \label{cor:prop-chaos}
Consider the linear filtering problem~\eqref{eq:dyn}-\eqref{eq:obs}
and the finite-$N$ deterministic FPF~\eqref{eq:Xit-d}. The initial condition $X_0^i\stackrel{\text{i.i.d}}{\sim} {\cal N}(m_0,\Sigma_0)$ for $i=1,2,\ldots,N$ and the dimension $d=1$. Under Assumptions (I) and (II), for any Lipschitz
function $f:\Re^d \to \Re$, in the asymptotic limit as $N\to \infty$
\begin{equation*}
\Expect\left[\left|\frac{1}{N}\sum_{i=1}^N
    f(X^i_t)-\Expect[f(X_t)|\clZ_t]\right|^2\right] \leq \frac{\text{(const.)}}{{N}} 
\end{equation*}
\end{corollary}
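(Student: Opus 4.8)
The plan is to compare the empirical average over the interacting particles with that over the $N$ independent mean-field copies $\bar X^i_t$ introduced in~\eqref{eq:barXit}, and then to run a (conditional) law of large numbers for the latter. Let $L$ be a Lipschitz constant of $f$ and write the quantity to be estimated as $T_1+T_2$, where $T_1:=\frac1N\sum_{i=1}^N\bigl(f(X^i_t)-f(\bar X^i_t)\bigr)$ is the particle-versus-copy discrepancy and $T_2:=\frac1N\sum_{i=1}^N\bigl(f(\bar X^i_t)-\Expect[f(\bar X^i_t)\mid\clZ_t]\bigr)$ is a centered empirical average over the mean-field copies; here I have used the identity $\Expect[f(\bar X^i_t)\mid\clZ_t]=\Expect[f(X_t)\mid\clZ_t]$, valid because each $\bar X^i_t$ has the same conditional law (given $\clZ_t$) as the mean-field solution $\bar X_t$, which by Proposition~\ref{thm:consistency-FPF-lin} is the posterior of $X_t$ given $\clZ_t$. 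Since $\Expect[|T_1+T_2|^2]\le 2\Expect[|T_1|^2]+2\Expect[|T_2|^2]$, it suffices to bound each of $\Expect[|T_1|^2]$ and $\Expect[|T_2|^2]$ by $\text{(const.)}/N$.

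For $T_1$ I would use Lipschitz continuity and the Cauchy--Schwarz inequality to get $|T_1|^2\le \frac{L^2}{N}\sum_{i=1}^N|X^i_t-\bar X^i_t|^2$, take expectations, and then quote the estimate~\eqref{eq:estimate_POA} of Proposition~\ref{prop:prop-chaos}, which (by symmetry of the particles, with the same constant for each $i$) gives $\Expect[|T_1|^2]\le L^2\,\Expect[|X^1_t-\bar X^1_t|^2]\le\text{(const.)}/N$. This is the step that carries the real analytic content, but it has already been done --- it is exactly Proposition~\ref{prop:prop-chaos}.

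For $T_2$ I would condition on $\clZ_t$. By the explicit formula of Proposition~\ref{prop:prop-chaos}, $\bar X^i_t=m_t+(\Sigma_t/\Sigma_0)^{1/2}(X^i_0-m_0)$, where $m_t,\Sigma_t,\Sigma_0$ are $\clZ_t$-measurable while the $X^i_0$ are i.i.d.\ and independent of $\clZ_t$; hence, conditionally on $\clZ_t$, the variables $f(\bar X^1_t),\dots,f(\bar X^N_t)$ are i.i.d., so
\begin{align*}
\Expect[|T_2|^2\mid\clZ_t] &= \frac1N\var\bigl(f(\bar X^1_t)\mid\clZ_t\bigr)\\
&\le \frac{L^2}{N}\,\Expect[|\bar X^1_t-m_t|^2\mid\clZ_t]=\frac{L^2\Sigma_t}{N},
\end{align*}
the inequality coming from comparing $f(\bar X^1_t)$ with the constant $f(m_t)$. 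Taking expectation over $\clZ_t$ and using that $\Sigma_t$ is bounded uniformly in $t$ (Theorem~\ref{thm:KF-stability}, since $\Sigma_t\to\Sigma_\infty$) gives $\Expect[|T_2|^2]\le\text{(const.)}/N$. Adding the two bounds proves the claim.

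The obstacle I anticipate is bookkeeping rather than anything deep. Because $f$ is only assumed Lipschitz and not bounded, the law-of-large-numbers step for $T_2$ needs a finite second moment of $f(\bar X^1_t)$, which is supplied by the Gaussianity of the mean-field copies (equivalently, the explicit formula of Proposition~\ref{prop:prop-chaos}) together with the uniform bound on $\Sigma_t$. One also has to keep in mind that the $\bar X^i_t$ are only \emph{conditionally} i.i.d.\ given the common observation history $\clZ_t$ --- which is why the variance estimate for $T_2$ is performed after conditioning on $\clZ_t$ --- and to check that the constant inherited from Proposition~\ref{prop:prop-chaos} can be taken uniform in $t$ (it can, since it is built from the Kalman stability bounds), so that the final constant is in fact uniform in $t$. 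Everything else reduces to the triangle inequality and the already-established Proposition~\ref{prop:prop-chaos}.
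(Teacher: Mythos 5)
Your proposal is correct and follows essentially the same route as the paper's proof: a triangle-inequality decomposition into the particle-versus-mean-field-copy discrepancy (controlled by Lipschitz continuity, Jensen/Cauchy--Schwarz, and the estimate~\eqref{eq:estimate_POA} of Prop.~\ref{prop:prop-chaos}) plus a law-of-large-numbers term for the conditionally i.i.d.\ copies $\bar X^i_t$. Your treatment of the second term is in fact slightly more careful than the paper's (conditioning on $\clZ_t$ explicitly and supplying the second-moment bound $L^2\Sigma_t$ needed since $f$ is Lipschitz rather than bounded), but the argument is the same.
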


\bibliographystyle{plain}
\bibliography{fpfbib,ref,fpfbib2,Optimization,meanfield,meanfield_v2} 
 
\appendix
\subsection{Derivation of evolution equations in \Sec{sec:evol}}\label{apdx:mean-var-evolution}

\newP{(A) Finite-$N$ stochastic FPF} Consider Eq.~\eqref{eq:Xit-s} for
the $i^{\text{th}}$ particle.  Summing up over the index
$i=1,\ldots,N$ and dividing by $N$, Eq.~\eqref{eq:mean-evolution-s} for the mean is obtained.
To obtain~\eqref{eq:var-evolution-s}, first define $\xi^i_t := X^i_t -
\mN_t$. Therefore,  
\begin{equation*}
\ud \xi^i_t = (A - \frac{1}{2}\mathsf{K}^{(N)}_tC)\xi^i_t \ud t + \sigma_B\ud B^i_t - \frac{1}{N}\sum_{j=1}^N \sigma_B\ud B^j_t 
\end{equation*}
and
\begin{align*}
\ud (\xi^i_t {\xi^i_t}^\top) = &(A - \frac{1}{2}\mathsf{K}^{(N)}_tC)\xi^i_t{\xi^i_t}^{\top} \ud t+ \xi^i_t {\xi^i_t}^\top (A - \frac{1}{2}\mathsf{K}^{(N)}_tC)^\top \ud t \\&+\frac{N-1}{N} \sigma_B\sigma_B^\top \ud t + \xi^i_t (\ud B_t^i-\frac{1}{N}\sum_{j=1}^N\ud B^j_t)^\top \sigma_B^\top\\
&+\sigma_B (\ud B_t^i-\frac{1}{N}\sum_{j=1}^N\ud B^j_t){\xi^i_t}^\top
\end{align*}
Summing over $i=1,\ldots,N$ and dividing by $(N-1)$ gives
\begin{align*}
\ud \SigN_t= &(A-\frac{1}{2}\k_t^{(N)}C) \SigN_t \ud t + \SigN_t(A-\frac{1}{2}\k_t^{(N)}C)^\top \ud t \\
&+ \sigma_B\sigma_B^\top\ud t + \frac{1}{N-1}\sum_{i=1}^N \xi^i_t {\ud B^i_t}^\top \sigma_B^\top + \frac{1}{N-1}\sum_{i=1}^N \sigma_B \ud B^i_t {\xi^i_t}^\top 
\end{align*}
which is Eq.~\eqref{eq:var-evolution-s} for the covariance.

\newP{(B) Finite-$N$ deterministic FPF} Eq.~\eqref{eq:Xit-d} is
obtained as before by summing up Eq.~\eqref{eq:Xit-d} for the
$i^{\text{th}}$ particle from $i=1,\ldots,N$.
The equation for the empirical mean is simply obtained by summing up
the equations~\eqref{eq:Xit-d} for $i=1,\ldots,N$.  To obtain the
equation for the empirical covariance, first define $\xi^i_t := X^i_t - \mN_t$.
Therefore,  
$\ud \xi^i_t = G_t^{(N)}\xi^i_t \ud t$
and
\begin{align*}
\ud (\xi^t{\xi^i_t}^\top) = G_t^{(N)}\xi^i_t{\xi^i_t}^\top\ud t + \xi^i_t{\xi^i_t}^\top {G_t^{(N)}}^\top \ud t
\end{align*}
Summing over $i=1,\ldots,N$ and dividing by $(N-1)$ gives
\begin{align*}
\frac{\ud \SigN_t}{\ud t} &= G_t^{N}\SigN_t + \SigN_t {G_t^{(N)}}^\top 
\end{align*}
which is Eq.~\eqref{eq:var-evolution-d} for the covariance.  

It is noted that $G_t^{(N)}$ is well-defined because $\SigN_0$ and
thus $\SigN_t$ is invertible because of Assumption (II).  

\subsection{Proof of the Prop.~\ref{prop:conv_error}}\label{apdx:mean-var-error}

Since the equations for the empirical mean~\eqref{eq:mean-evolution-d}
and the empirical covariance~\eqref{eq:var-evolution-d} are identical to
the Kalman filter~\eqref{eq:KF-mean}-\eqref{eq:KF-variance}, the
a.s. convergence of mean and variance follows from the filter
stability theory (see Theorem~\ref{thm:KF-stability}).  In the
following, mean-squared estimates are derived for the large-$N$ limit.  
We begin with the scalar ($d=1$) case:

\medskip

\newP{Scalar case} The explicit solution of~\eqref{eq:var-evolution-d} is given by
\begin{equation}
\SigN_t  = \Sigma_\infty + \frac{e^{-2\lambda_0 t}}{\frac{1}{\SigN_0-\Sigma_\infty}+\frac{C^2}{2\lambda_0}(1-e^{-2\lambda_0t})} =: f(\SigN_0,t)\label{eq:SigNt}
\end{equation}
where $\lambda_0 =(A^2+\sigma_B^2C^2)^\half$ and $\Sigma_\infty = \frac{A+\lambda_0}{C^2}$.  The function $f(x,t)$ is Lipschitz with respect to $x$ with Lischitz constant $\beta e^{-2\lambda_0t}$ where
 $\beta=(\frac{2\lambda_0}{\lambda_0-A})^2$. Therefore 
\begin{equation*}
|\SigN_t-\Sigma_t| \leq \beta e^{-2\lambda_0 t}|\SigN_0-\Sigma_0|
\end{equation*}
Hence 
\begin{align*}
\Expect[|\SigN_t-\Sigma_t|^2]&\leq \beta^2e^{-4\lambda_0 t}\Expect[|\SigN_0-\Sigma_0|^2]\\&\leq \beta^2e^{-4\lambda_0 t}\frac{\Expect[|X_0-m_0|^4]-\Sigma^2_0}{N} + O(\frac{1}{N^2})
\end{align*}
which gives the result~\eqref{eq:var-estimate} for the scalar case.

\medskip

The estimate~\eqref{eq:mean-estimate} for the mean is obtained next.  Define $\delta m_t :=
\mN_t-m_t$ and $\delta \Sigma_t :=
\SigN_t-\Sigma_t$. Using-\eqref{eq:mean-evolution-d} and~\eqref{eq:KF-mean},
\begin{align*}
\ud \delta m_t = &(A-C^2\SigN_t)\delta m_t \ud t +\delta \Sigma_t C\ud I_t
\end{align*}
where $\ud I_t := \ud Z_t  - Cm_t \ud t$ is the innovation process. Therefore, 
\begin{align*}
\delta m_t = e^{\int_0^t (A-C^2\SigN_s)\ud s} \delta m_0 + \int_0^t e^{\int_s^t (A-C^2\SigN_\tau)\ud \tau } C\delta \Sigma_s \ud I_s
\end{align*}
Squaring the expression and taking the expectation yields
\begin{align*}
\Expect[|\delta m_t|^2] &= e^{2\int_0^t (A-C^2\Sigma_s)\ud s} \Expect[|\delta m_0|^2] \\&+ \int_0^t e^{2\int_s^t (A-C^2\Sigma_\tau)\ud \tau} C^2\Expect[|\delta \Sigma_s|^2] \ud t
\end{align*}
where we have used the fact that the innovation process $I_t$ is a
Wiener process~\cite[Lemma 5.6]{xiong2008}. 
Then using the bound 
\begin{align*}
\int_s^t&(A-C^2\SigN_s)\ud s\\&= -\lambda_0(t-s) + \log(\frac{1+(\SigN_0-\Sigma_\infty)\frac{C^2}{2\lambda_0}(1-e^{-2\lambda_0s}) }{1+(\SigN_0-\Sigma_\infty)\frac{C^2}{2\lambda_0}(1-e^{-2\lambda_0t}) })
\\
&\leq -\lambda_0(t-s)+\frac{1}{2}|\log(\beta)|
\end{align*}
and $ \Expect[|\delta \Sigma_s|^2] \leq \beta^2e^{-4\lambda_0 t} \Expect[|\delta \Sigma_0|^2]$
we obtain
\begin{align*}
\Expect[|\delta m_t|^2] \leq &e^{-2\lambda_0 t+|\log(\beta)|}\Expect[|\delta m_0|^2]\\&+C^2\beta^2e^{-2\lambda_0 t+|\log(\beta)|}\frac{1}{2\lambda_0}(1-e^{-2\lambda_0 t})\Expect[|\delta \Sigma_0|^2]
\end{align*}
The mean-squared estimate~\eqref{eq:mean-estimate} for the mean in the scalar case follows from noting $\Expect[|\delta m_0|^2] = \frac{\Expect[|X_0-m_0|^2]}{N}$ and $\Expect[|\delta \Sigma_0|^2]\leq \frac{\Expect[|X_0-m_0|^4}{N}$.

\medskip

\newP{Vector case} The explicit solution of the Riccati equation in
the vector case is given by~\cite[pp. 149]{brockett2015finite}
\begin{align}
\SigN_t = \Sigma_\infty + e^{F_\infty t}D_t^{-1}e^{F_\infty^\top t}=:f(\SigN_0,t)\label{eq:SigNt-vec}
\end{align}
where $F_\infty := A - \Sigma_\infty C^\top C$, and 
\begin{align*}
D_t &:=(\SigN_0-\Sigma_\infty)^{-1}+\int_0^t e^{F_\infty^\top s}C^\top Ce^{F_\infty s}\ud s
\end{align*}
The function $f(x,t)$ is Lipschitz with respect to to $x$ with Lipschitz constant $\beta_de^{-2\lambda_0t}$ where
$\beta_d := \sup_{\SigN_0} 
\snorm{(D_t(\SigN_0-\Sigma_\infty))^{-1}}^2$
Therefore,
\begin{align*}
\Fnorm{\SigN_0-\Sigma_t} \leq& \beta_de^{-2\lambda_0t}\Fnorm{\SigN_0-\Sigma_0}
\end{align*}
Squaring both sides and taking expectation gives the covariance error estimate:
\begin{align*}
\Expect[\Fnorm{\SigN_t-\Sigma_t}^2] \leq \beta_d^2e^{-4\lambda_0 t} \frac{\Expect[|X_0-m_0|^4]-\trace(\Sigma_0^2)}{N}+ O(\frac{1}{N^2})
\end{align*}
where we have used 
$\Expect[\Fnorm{\SigN_0-\Sigma_0}^2]=\frac{\Expect[|X_0-m_0|^4]-\trace(\Sigma_0^2)}{N} + O(N^{-2})$.


\medskip

The procedure for obtaining the error estimate for the mean is also as before.
As in the scalar case,
\begin{align*}
\delta m_t = \Phi_{t,0} \delta m_0 + \int_0^t \Phi_{t,s}\delta\Sigma_s C^\top\ud I_s
\end{align*}
where $\Phi_{t,s}$ is the state transition matrix for $F^{(N)}_t=A-\SigN_tC^\top C$. The expected norm-squared of $\delta m_t$ is 
\begin{align}
\Expect[|\delta m_t|^2] = & \Expect[\delta m_0^\top \Phi_{t,0}^\top
\Phi_{t,0}\delta m_0]  \nonumber \\&+ \int_0^t \trace(C\delta \Sigma_s
\Phi_{t,s}^\top\Phi_{t,s}\delta \Sigma_s C^\top)\ud s
\label{eq:mean_sq_11}
\end{align}
where we used the fact that the innovation process $I_t$ is a Wiener process. Expressing 
$F_t^{(N)} = F_\infty - e^{F_\infty t}D_t^{-1}e^{F^\top_\infty
  t}C^\top C$, 
its spectral norm is bounded as
$\snorm{F_t^{(N)}} \leq -\lambda_0+ e^{-2\lambda_0  t}2\lambda_0 c_0$
where $c_0:=\frac{1}{2\lambda_0}\snorm{C^\top C}\sup_{\SigN_0}\snorm{D_t^{-1}}$.
Therefore $\snorm{\Phi_{t,s}}\leq e^{-\lambda_0 t + c_0}$.
Use this inequality in~\eqref{eq:mean_sq_11} to conclude
\begin{align*}
\Expect[|\delta m_t|^2]& = e^{-2\lambda_0 t+2c_0}\Expect[|\delta m_0|^2] + \\& +e^{-2\lambda_0 t+2c_0}\frac{1-e^{-2\lambda_0 t}}{2\lambda_0}\snorm{CC^\top}\beta_d^2\Expect[\Fnorm{\delta \Sigma_0^2}^2]
\end{align*}
The error bound~\eqref{eq:mean-estimate} follows from noting, as also in the scalar case,
$\Expect[|\delta m_0|^2]= \frac{1}{N}\Expect[|X_0-m_0|^2]$ and $\Expect[\|\delta \Sigma_0\|_F^2]\leq \frac{\Expect[|X_0-m_0|^4}{N}$.
 

\subsection{Proofs of the Prop.~\ref{prop:prop-chaos} and Cor.~\ref{cor:prop-chaos}}\label{apdx:prop-chaos}
\begin{proof}
Part (i): Use the decomposition 
\begin{equation*}
X^i_t = \mN_t + \xi^i_t,\quad \bar{X}^i_t = m_t + \bar{\xi}^i_t
\end{equation*}
Recall that 
\begin{align*}
\ud {\xi}^i_t &= G_t^{(N)}{\xi}^i_t \ud t,\quad 
\ud \bar{\xi}^i_t = G_t\bar{\xi}^i_t\ud t
\end{align*} 
Hence, for the scalar case
\begin{align*}
{\xi}^i_t &= e^{\int_0^tG_s^{(N)}\ud s}{\xi}^i_0,\quad 
\bar{\xi}^i_t = e^{\int_0^tG_s\ud s}\bar{\xi}^i_0
\end{align*} 
By definition, $G^{(N)}_t$ and $G_t$ satisfy
\begin{align*}
2\SigN_t G_t = \frac{\ud \SigN_t}{\ud t},\quad 2\Sigma_t G_t = \frac{\ud \Sigma_t}{\ud t}
\end{align*}
Therefore 
\begin{align*}
\int_0^tG^{(N)}_s \ud s = \frac{1}{2}\log(\frac{\SigN_t}{\SigN_0}),\quad \int_0^tG_s \ud s = \frac{1}{2}\log(\frac{\Sigma_t}{\Sigma_0})
\end{align*}
which concludes the result for part (i) of the Proposition.

Part (ii): Use the triangle inequality to conclude
\begin{equation*}
\Expect[|X^i_t-\bar{X}^i_t|^2]^{\frac{1}{2}} \leq \Expect[|\xi^i_t-\bar{\xi}^i_t|^2]^{\frac{1}{2}} +\Expect[|\mN_t-m_t|^2]^{\frac{1}{2}} 
\end{equation*}
The error between $\mN_t$ and $m_t$ is already obtained as part of the Prop.~\ref{prop:conv_error} given by~\eqref{eq:mean-estimate}. 
Using the explicit solutions in part (i), the other term is the $L^2$-norm of  
\begin{align*}
\xi^i_t - \bar{\xi}^i_t &=\underbrace{(\left(\frac{\SigN_t}{\SigN_0}\right)^{\frac{1}{2}}-\left(\frac{\Sigma_t}{\Sigma_0}\right)^{\frac{1}{2}}){\xi}^i_0}_{\text{(I)}}+ \underbrace{\left(\frac{\Sigma_t}{\Sigma_0}\right)^{\frac{1}{2}}(\xi^i_0-\bar{\xi}_0)}_{\text{(II)}}
\end{align*}
A bound on the second term~(II) is obtained as follows:
%
\begin{align*}
\Expect\left[\frac{\Sigma_t}{\Sigma_0}|\xi^i_0-\bar{\xi}_0|^2\right]\leq
\frac{\Sigma_t}{\Sigma_0}\Expect\left[|\xi^i_0-\bar{\xi}_0|^2\right]
&=\frac{\Sigma_t}{\Sigma_0}\Expect\left[|\mN_0-m_0|^2\right]\\&= \frac{\Sigma_t}{\Sigma_0} \frac{\Expect[|X_0-m_0|^2]}{N}
\end{align*}
where we used the identity $\xi^i_0+\mN_0=\bar{\xi}^i_0+m_0$ because $X^i_t=\bar{X}^i_t$.
The bound the first term (I) is involved. Define $g(x,t):=\sqrt{\frac{f(x,t)}{x}}$ where $f$ is defined in~\eqref{eq:SigNt}. We are interested in obtaining a bound on
$\Expect[|g(\SigN_0,t)-g(\Sigma_0,t)|^2{\xi^i_0}^2]$. Denote
$r(\omega):=g(\SigN_0(\omega),t)-g(\Sigma_0,t)$ and express
\begin{equation*}
r= r\mathds{1}_S + r\mathds{1}_{S^c}
\end{equation*}
where the event $S:=\{\omega\;:\;\SigN_0(\omega)\geq
\frac{1}{2}\Sigma_0\}$. On $S$, 
the function $g(x,t)$ is Lipschitz with respect to $x$ with Lipschitz constant $L_g=\frac{\beta e^{-2\lambda_0 t}}{2\min(\Sigma_0,\Sigma_\infty)}+\frac{2\sqrt{\Sigma_\infty}}{\min(\Sigma_\infty,\Sigma_0)^{3/2}}$. On the complement $S^c$:
\begin{align*}
\Expect[r^2{\xi^i_0}^2\mathds{1}_{S^c}]&\leq 
 2\Expect\left[\frac{\SigN_t}{\SigN_0}{\xi^i_0}^2\mathds{1}_{S^c}\right] + 2\Expect\left[\frac{\Sigma_t}{\Sigma_0}{\xi^i_0}^2\mathds{1}_{S^c}\right]
\\&\leq 2N\Expect[\SigN_t\mathds{1}_{S^c}] + 2N\frac{\Sigma_t}{\Sigma_0}
\Expect[\SigN_0\mathds{1}_{S^c}]\\&\leq 4N(\Sigma_\infty + \Sigma_0)\P({S^c})
\end{align*}
where we used the bound ${\xi^i_0}^2\leq N\SigN_0$. For large $N$, the
probability of the event $S^c$ exponentially decays with $N$ (Chernoff
bound).  As a result, for large $N$, the bound is obtained in
terms of the Lipschitz constant:
\begin{align*}
\Expect[|g(\SigN_0,t)-g(\Sigma_0,t)|^2{\xi^i_0}^2]
&\leq 
L_g^2\Expect\left[|\SigN_0-\Sigma_0|^2|{\xi}^i_0|^2\right]\\
&\leq L_g^2 \Expect\left[|\SigN_0-\Sigma_0|^4\right]^{\frac{1}{2}}\Expect\left[|{\xi}^i_0|^4\right]^{\frac{1}{2}}\\
&\leq L_g^2\frac{\Expect[|X_0-m_0|^4]^{\frac{3}{2}}}{{N}}+O(N^{-2})
\end{align*}
where we used the Lipschitz property in the first step, the Cauchy-Schwarz inequality in the second step, and $\Expect[|\SigN_0-\Sigma_0|^4]^{\frac{1}{4}}= \frac{\Expect[|X_0-m_0|^4]^\frac{1}{2}}{\sqrt{N}}+O(N^{-1})$ in the last step. 
\end{proof}

\medskip

\begin{proof}[Proof of the Corollary~\ref{cor:prop-chaos}]
Using the triangle inequality,
\begin{equation*}
\begin{aligned}
\Expect[|\frac{1}{N}\sum_{i=1}^N f(X^i_t) -\hat{f}|^2]^{1/2} \leq&\Expect[|\frac{1}{N}\sum_{i=1}^N f(X^i_t) -\frac{1}{N}\sum_{i=1}^Nf(\bar{X}^i_t)|^2]^{1/2} \\&+ \Expect[|\frac{1}{N}\sum_{i=1}^N f(\bar{X}^i_t) -\hat{f}|^2]^{1/2} 
\end{aligned}
\end{equation*}
where $\hat{f}:=\Expect[f(X_t)|\clZ_t]$. The second term is given by
\begin{equation*}
\Expect[|\frac{1}{N}\sum_{i=1}^N f(\bar{X}^i_t) -\hat{f}|^2]^{1/2} = \frac{\var(f)}{\sqrt{N}}
\end{equation*}
because $\bar{X}^i_t$ are i.i.d with distribution equal to the conditional distribution. 
It only remains to bound the first term: 
\begin{align*}
\Expect[|\frac{1}{N}\sum_{i=1}^N f(X^i_t)& -\frac{1}{N}\sum_{i=1}^N f(\bar{X}^i_t)|^2] \leq \frac{1}{N}\sum_{i=1}^N \Expect[|f(X^i_t)-f(\bar{X}^i_t)|^2]\\
&\leq \frac{\text{(const.)}}{N}\sum_{i=1}^N
\Expect[|X^i_t-\bar{X}^i_t]|^2]
\leq \frac{\text{(const.)}}{N}
\end{align*}
where we used Jensen's inequality in the first step, the Lipschitz property of $f$ in the second step, and the estimate~\eqref{eq:estimate_POA} in the last step. 
\end{proof}
\end{document}